\author{Victor S. Miller}
\title{Counting Matrices That are Squares}
\date{April 2016}
\newcolumntype{R}[1]{>{\RaggedLeft\arraybackslash}p{#1}}
\newcolumntype{L}[1]{>{\RaggedRight\arraybackslash}p{#1}}
\definecolor{shadecolor}{rgb}{0.9,0.9,0.8}
\newcommand{\GF}[1][2]{\mathbb{F}_{#1}}
\newcommand{\ZZ}{\mathbb{Z}}
\newcommand{\CC}{\mathbb{C}}
\newcommand{\cA}{\mathcal{A}}
\newcommand{\cC}{\mathcal{C}}
\newcommand{\cP}{\mathcal{P}}
\newcommand{\cS}{\mathcal{S}}
\newcommand{\cI}{\mathcal{I}}
\newcommand{\cM}{\mathcal{M}}
\DeclareMathOperator{\GL}{GL}
\DeclareMathOperator{\Mat}{Mat}
\newtheorem{proposition}{Proposition}
\newtheorem{lemma}{Lemma}
\newtheorem{corollary}{Corollary}
\newtheorem{theorem}{Theorem}
\newtheorem{definition}{Definition}
\newcommand{\propref}[1]{Proposition~\ref{prop:#1}}
\newcommand{\lemref}[1]{Lemma~\ref{lem:#1}}
\newcommand{\thmref}[1]{Theorem~\ref{thm:#1}}
\newcommand{\corref}[1]{Corollary~\ref{cor:#1}}
\newcommand{\tabref}[1]{Table~\ref{tab:#1}}
\newcommand{\lstref}[1]{Listing~\ref{lst:#1}}
\newcommand{\figref}[1]{Figure~\ref{fig:#1}}
\newcommand{\secref}[1]{Section~\ref{sec:#1}}
\newenvironment{dedication}{\vspace{6ex}\begin{quotation}\begin{center}\begin{em}}%
{\par\end{em}\end{center}\end{quotation}}
\begin{document}
\maketitle
% \cleardoublepage
% \begin{dedication}
% \end{dedication}
% \cleardoublepage
\begin{dedication}
Dedicated to Neil Sloane on his 75th Birthday
\end{dedication}
\begin{abstract}
On the math-fun mailing list (7 May 2013), Neil Sloane asked to calculate
  the number of $n \times n$ matrices with entries in $\{0,1\}$ which
  are squares of other such matrices.  In this paper we analyze the
  case 
  that the arithmetic is in $\mathbb{F}_{2}$. We follow
  the dictum of Wilf (``What is an answer?'') to derive a ``effective'' algorithm to count
  such matrices in much less time than it takes to enumerate them.
  The algorithm which we use
  involves the analysis of conjugacy classes of matrices.  The
  restricted integer partitions which arise are
  counted by the coefficients of one of Ramanujan's mock Theta
  functions, which we found thanks to Sloane's OEIS (Online
  Encyclopedia of Integer Sequences).

   Let $a_n$ be the number
  elements of ${\rm Mat}_n(\mathbb{F}_{2})$ which are squares, and $b_n$ be the
  number of elements of ${\rm GL}(n,\mathbb{F}_{2})$ which are squares.
  The numerical results strongly suggest that there are constants
  $\alpha,\beta > 0$ such that $a_n \sim \alpha 2^{n^2}$, $b_n \sim
  \beta 2^{n^2}$.
\end{abstract}

\section{Introduction}
\label{sec:intro}
On the math-fun mailing list (7 May 2013), Neil Sloane asked ``What is
$a(n) = $ the number of $n \times n$ matrices in $R$ that have a square
root in $R$'', where $R$ is the set of $n \times n$ matrices with entries
that are 0 or 1, for various matrix rings.

In this paper we give an answer to his question when
$R=\Mat_n(\GF[2])$, the $n\times n$ matrices with coefficients in $\GF[2]$, in
the sense of an algorithm to calculate $a(n)$, whose values we give
for $n \le 60$.  We also calculate the closely related sequence $b(n)$
of matrices which are squares in $\GL_n(\GF[2])$.
The calculation is based on the observation that whether or not a matrix $A$ is a square
is a class-function---i.e., it only depends on the conjugacy class of $A$.
We then use the known characterization of conjugacy classes in
$\Mat_n(\GF[2])$ along with formulas for the cardinality of their
centralizers to derive a method for calculating $a(n)$.  Along the way
we also found a number of other interesting sequences which were not
already in the OEIS.

We should
expect that $a(n)$ and $b(n)$ both grow approximately like $2^{n^2}$
because of a general result about word maps \cite{MR2041227}: Let $W$ be a
word in the free group on $d$ generators, $x_1, \dots, x_d$, and $G$ be a group.  We
define a map from the set $G^d = G \times \dots \times G$ ($d$ times)
to $G$, also denoted by $W$, by sending $(g_1, \dots, g_d)$ to the element of $G$ obtained
by substituting $g_i$ for $x_i$ in $W$.  If $W$ is not the trivial
element, then the image $W(G^d)$ is large:  If $G_n$ is a
sequence of non-abelian simple groups such that $|G_n| \rightarrow \infty$, then
\begin{equation*}
  \lim_{n \rightarrow \infty} \log\left(\left|W\left(G^d_n\right)\right|\right)/\log(|G_n|) = 1.
\end{equation*}
To apply this to our case, we take $G_n = \GL_n(\GF[2])$ (which is simple), $d=1$, and
$W=x_1^2$.  This shows that
$\lim_{n\rightarrow  \infty}\log(b(n))/\log(|\GL_n(\GF[2])|)> 0$.
However (see \corref{glorder}) we have $|\GL_n(\GF[2])| \sim
\gamma_2 2^{n^2}$ for an explicit $\gamma_2 > 0$. 
Since $2^{n^2} \ge a(n) \ge b(n)$ we get the above assertion.

Counting squares in simple groups arises in the context of
proving that every element in a simple group can be written as the
product of two squares~\cite{MR2833479}.

\section{Generating and Counting}
\label{sec:counting}

When faced with a problem such as this one, the first thing to try is to
generate all squares and count them.  One can do this is in a
straightforward way: enumerate all matrices of a given size,
square them, and keep track of their counts via a data structure such
as a hash table.  However, one can be much more efficient than that.
Since the matrices that we're interested in have all entries in
$\{0,1\}$ it makes sense to use a Gray code to generate all such
matrices.  There are many different variants of Gray code, but all of
them have the property that adjacent matrices in the sequence
differ in precisely one position.  Suppose that position is $(i,j)$.
Let $E$ denote the matrix which is all 0s except for 1 in the $(i,j)$
position.  We'll keep track of $A$---the current matrix---and
$B = A^2$.  When we change $A$ to $A+E$, we change $B$ to
$(A+E)^2 = A^2 + E A + A E + E^2$.  Note, first, that $E^2 =0$ unless
$i=j$, in which 
case $E^2 = E$.  Note also that $EA$ is all 0s except for its $i$-th row,
which is the $j$-th row of $A$.  Similarly, $AE$ is all 0s except for
its $j$-th column which is the $i$-th column of $A$.  Thus, if we
represent $A$ as $n^2$ bits in a computer word, we can update $B$ by a
few shifts and masking operations, as well as at most 3 exclusive
ORs.  See \lstref{ccode} on page~\pageref{sec:alg} for a C program
implementing this. One can calculate the exact number of $5 \times 5$ matrices
which are squares in about $0.302$ seconds on a fairly standard PC
workstation.  However, this approach can't be pushed much further in
practice, since it needs about $2^{n^2 - 3}$ bytes of storage for a
table of $n^2$ bits and has running time proportional to $2^{n^2}$.
In fact the calculation of $a(6)$ takes 2077.957 seconds on
the same workstation.

\section{Matrices and Their Conjugacy Classes}
\label{sec:matrices}

We begin by collecting the results about conjugacy classes of
matrices over finite fields that we need to derive the algorithm.  The main tool is {\em
  rational canonical form}, which is a generalization of the well-known
Jordan canonical form.  Although this is well known (for example, see
\cite[Chapter XIV]{MR1878556}) few sources\footnote{For example:
  Kung \cite{MR604337} rederived this result independently and Green
  \cite{MR0072878} refers to an unpublished manuscript of Philip Hall,
  which antedates Dickson's paper.}
give the formula for the
order of the centralizer of an element.
For matrices over finite fields this is
originally due to Dickson \cite{MR1507868}.  An exposition in more
modern notation 
may be found in MacDonald \cite[p.~87]{MR553598}.
\begin{definition}
  Two matrices $A,B \in \Mat_n(K)$, where $K$ is a field, are {\em
    similar} (written $A \sim B$) if there is an invertible
  $U \in \Mat_n(K)$ with $A =
  U^{-1} B U$.  A {\em $K$-conjugacy class} is a set of all matrices
  $U^{-1} A U$ for a fixed $A$ and all invertible $U$.  The {\em
    centralizer} of $A$ is $C_K(A) := \{ U \in \GL_n(K) : U^{-1} A U = A
  \}$.

  A matrix $A$ is {\em semisimple} if $A$ is similar to a diagonal
  matrix (over the algebraic closure of the coefficient field).  It is
  {\em semisimple regular} if the elements on the diagonal are
  distinct.
\end{definition}
Note (e.g., see \cite[Chapter XIV]{MR1878556}) that $A$ and $B$
are similar as members of $\Mat_n(K)$ if and only if they are
similar as members of $\Mat_n(L)$ where $L/K$ is any field extension.

Every semisimple matrix $A$ over $\GF[2^n]$ is similar to a diagonal
matrix.  Raising such a matrix to the $2^n$ power permutes the elements
on the diagonal, since $A$ is defined over $\GF[2^n]$.  We have
$A \sim A^{2^n}$ which is obviously a square.
Thus, the set of squares of matrices over $\GF[2^n]$ contains all
semisimple matrices.

If $A,B$ are square matrices, we denote the direct sum by
\begin{displaymath}
  A \oplus B =
  \begin{pmatrix}
    A & 0 \\
    0 & B
  \end{pmatrix}.
\end{displaymath}
If $r$ is a nonnegative integer denote by $[r]A$ the direct sum of
$r$ copies of $A$.

The key observation (from standard group theory) is that the number of
elements in the conjugacy class of $A$ is $|\GL_n(K)|/|C_K(A)|$.

Conjugacy classes have a fixed standard representative given by
combinations of integer partitions and $K$-irreducible polynomials.
\begin{definition}
  A {\em partition} is a non-increasing sequence of nonnegative integers
  $\lambda$, with all but finitely many $\lambda_i = 0$.  Each of the
  $\lambda_i$ is referred to as a {\em part}.
  The weight
  $|\lambda|  = \sum_i \lambda_i$.  The {\em conjugate} of a partition
  $\lambda$, written $\lambda'$, is defined by
  $\lambda'_j := \#\{ i \mid   \lambda_i \ge j\}$.  The {\em multiplicities} of a partition
  $\lambda$ are $m_i(\lambda) = \#\{\lambda_j = i\}$.
  We have $m_i(\lambda) = \lambda'_i - \lambda'_{i+1}$.  Note that
  $|\lambda| = |\lambda'|$ and that
  conjugation is an involution.

  It is convenient to also consider the empty partition, denoted by
  $\emptyset$, with $|\emptyset| = 0$ and $\emptyset' = \emptyset$.

  Let $\cP$ denote the set of partitions.  We also write partitions in
  the form $1^{m_1} 2^{m_2} \dots$, indicating that $i$ occurs with
  multiplicity $m_i$, where we omit those factors with $m_i=0$.
\end{definition}

For a prime power $q$ let $\cI(q)$ denote the set of
$\GF[q]$-irreducible monic polynomials with coefficients in $\GF[q]$,
and $\cI(q)_d$ those elements of $\cI(q)$ of degree $d$.  We also
denote by $\cI'(q)_d = \cI(q)_d$ if $d > 1$ and $\cI'(q)_1 = \{X -
\alpha: \alpha \in \GF[q]^*\}$, i.e., all of $\cI(q)_1$ except for the
polynomial $X$.

\begin{definition}
  Let $\phi$ be a monic polynomial over a field $K$,
  $\phi(X) = \sum_{i=0}^{\deg(\phi)} a_i X^i$.
  Define the companion matrix
  \begin{displaymath}
    M(\phi) = 
    \begin{pmatrix}
      0 & 1 & 0 & \dots & 0 \\
      0 & 0 & 1 & \dots & 0 \\
      \vdots & \vdots & \vdots & \ddots & \vdots \\
      0 & 0 & 0 & \dots & 1 \\
      -a_0 & -a_1 & -a_2 & \dots & -a_{\deg(\phi) - 1}
    \end{pmatrix}.
  \end{displaymath}
  Note that this is the matrix of multiplication by $X$ on polynomials
(mod $\phi(X)$) with respect
  to the ordered basis $1,X,\dots,X^{\deg(\phi)-1}$.
\end{definition}
For convenience, we denote by $M(1)$ the $0 \times 0$ matrix.
\begin{proposition}
  \label{prop:conjugacy}
  Every conjugacy class of an element of $\Mat_n(\mathbb{F}_q)$ is uniquely specified by a function $\Lambda:
  \cI(q) \rightarrow \cP$, where for all but finitely many $\phi \in
  \cI$, we have $\Lambda(\phi) = \emptyset$, and for some $\phi$,
  $\Lambda(\phi) \ne \emptyset$.  The dimension of such a $\Lambda$ is
  $\dim(\Lambda) := \sum_{\phi \in \cI} |\Lambda(\phi)|\deg(\phi)$.
The standard representative of this conjugacy class is
\begin{displaymath}
  \bigoplus_{\phi \in \cI(q)} \bigoplus_i M(\phi^{\Lambda(\phi)_i}).
\end{displaymath}
\end{proposition}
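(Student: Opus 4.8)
The plan is to recast the similarity problem as a classification of modules over the principal ideal domain $\GF[q][X]$. Given $A \in \Mat_n(\GF[q])$, I would make $V = \GF[q]^n$ into a $\GF[q][X]$-module $V_A$ by letting $X$ act as $A$, so that $f(X)\cdot v = f(A)v$. The first step is the standard observation that $A \sim B$ if and only if $V_A \cong V_B$ as $\GF[q][X]$-modules: an isomorphism of such modules is precisely a $\GF[q]$-linear change of basis $U$ intertwining the two actions of $X$, i.e. satisfying $UA = BU$, which is exactly the similarity relation $A = U^{-1}BU$.

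With this dictionary in hand, I would invoke the structure theorem for finitely generated torsion modules over a PID. Since $V_A$ is finite-dimensional over $\GF[q]$, it is a torsion $\GF[q][X]$-module, and its primary decomposition expresses it (uniquely up to reordering) as a direct sum of cyclic modules $\GF[q][X]/(\phi^{k})$ with $\phi \in \cI(q)$ and $k \ge 1$. For each fixed irreducible $\phi$, I would collect the exponents $k$ occurring in this decomposition into a non-increasing sequence, which is exactly a partition $\Lambda(\phi)$. Finiteness of the number of summands forces $\Lambda(\phi) = \emptyset$ for all but finitely many $\phi$, while $V_A \neq 0$ forces $\Lambda(\phi) \neq \emptyset$ for at least one $\phi$.

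To produce the asserted standard representative I would use the remark accompanying the definition of the companion matrix: $M(\phi^{k})$ is the matrix of multiplication by $X$ on $\GF[q][X]/(\phi^{k})$ in the power basis $1, X, \dots, X^{\deg(\phi^{k})-1}$. Hence the block sum $\bigoplus_\phi \bigoplus_i M(\phi^{\Lambda(\phi)_i})$ realizes the action of $X$ on the corresponding direct sum of cyclic modules, and so lies in the class encoded by $\Lambda$. The dimension formula is then bookkeeping: $\dim_{\GF[q]} \GF[q][X]/(\phi^{k}) = k\deg(\phi)$, so summing over all blocks gives $\dim(\Lambda) = \sum_\phi \sum_i \Lambda(\phi)_i \deg(\phi) = \sum_\phi |\Lambda(\phi)|\deg(\phi)$, which equals $n$.

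The one genuinely substantive ingredient — and the step I expect to be the main obstacle — is the \emph{uniqueness} half of the structure theorem: that the multiset of primary cyclic factors (equivalently, the function $\Lambda$) is an isomorphism invariant of $V_A$, not merely one possible decomposition. This is what guarantees both that $\Lambda$ is well defined on conjugacy classes and that distinct $\Lambda$ give non-similar matrices. Rather than reprove it, I would cite the standard development over a PID (e.g. \cite[Chapter XIV]{MR1878556}), together with the remark already recorded in the text that similarity over $\GF[q]$ is insensitive to field extension, which lets one read off the factors $\phi^{k}$ from the Jordan structure over $\overline{\GF[q]}$ whenever that is more convenient.
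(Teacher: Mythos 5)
Your proof is correct: the dictionary between similarity of matrices and isomorphism of $\GF[q][X]$-module structures on $\GF[q]^n$, followed by the primary/cyclic decomposition of a finitely generated torsion module over a PID (with the companion matrix $M(\phi^k)$ realizing multiplication by $X$ on $\GF[q][X]/(\phi^k)$, exactly as noted in the paper's definition of the companion matrix), is sound, and your treatment of the finiteness, nontriviality, and dimension bookkeeping is accurate. The paper itself gives no proof of this proposition, presenting it as the classical theory of rational canonical form and citing \cite[Chapter~XIV]{MR1878556} and Dickson \cite{MR1507868}; your argument is essentially the standard development found in that cited source, so it matches the paper's (implicit) approach.
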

\begin{definition}
  The standard representative for a matrix given by
  \propref{conjugacy} is called the {\em rational canonical form} of
  the matrix.
\end{definition}
Note that the characteristic polynomial of the conjugacy class
represented by $\Lambda$ is $\prod_{\phi \in \cI(q)}
\phi^{|\Lambda(\phi)|}$ and the minimal polynomial is
$\prod_{\phi \in \cI(q)} \phi^{\max_i \Lambda(\phi)_i}$.
\begin{definition}
  Let $M \in \Mat_n(K)$, $\phi_M(X)$ denote its characteristic
  polynomial, and $\psi_M(X)$ denote its minimal polynomial.  We say
  that $M$ is {\em separable} if $\phi_M(X)$ has distinct roots in the
  algebraic closure of $K$, $M$ is {\em semisimple} if $\psi_M(X)$ has
  distinct roots in the algebraic closure, and {\em cyclic} if $\phi_M
  = \psi_M$.

  We note that this definition of semisimple is equivalent to the
  previous definition.  
\end{definition}
\begin{definition}
  The $q$-Pochammer symbol $(a;q)_n := \prod_{k=0}^{n-1}(1-aq^k)$ when
  $n$ is a positive integer.  We also set $(a;q)_0 = 1$ and $(a;q)_{-n} =
  1/(aq^{-n};q)_n$.
\end{definition}

\begin{proposition}[Dickson \cite{MR1507868}]
  \label{prop:centralizer}
  Let $\lambda \in \cP$ be a partition, and $q$ a prime power.
  Define $C(\lambda,q) = q^{\sum_i (\lambda'_i)^2}\prod_{i \ge 1} (q^{-m_i(\lambda)};q)_{m_i(\lambda)}$.
    The order of the centralizer of an element associated to the data
  $\Lambda: \cI(q) \rightarrow \cP$ is $\prod_{\phi \in \cI(q)}
  C(\Lambda(\phi),q^{\deg(\phi)})$. 
\end{proposition}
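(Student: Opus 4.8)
The plan is to realize the centralizer as the unit group of an endomorphism ring and then reduce everything to a purely combinatorial count of automorphisms of finite modules over a discrete valuation ring. First I would view $V = \GF[q]^n$ as a module over the polynomial ring $\GF[q][X]$, with $X$ acting as the matrix $A$ in the rational canonical form specified by $\Lambda$. A matrix $U$ commutes with $A$ precisely when it is a $\GF[q][X]$-module endomorphism of $V$, and it lies in $\GL_n$ precisely when that endomorphism is invertible; hence $|C_{\GF[q]}(A)| = |\mathrm{Aut}_{\GF[q][X]}(V)|$. By the primary decomposition (the Chinese Remainder Theorem applied to the pairwise coprime factors $\phi^{|\Lambda(\phi)|}$ of the characteristic polynomial), $V = \bigoplus_{\phi} V_\phi$, and every endomorphism preserves each $V_\phi$ because their annihilators are coprime. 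Consequently $\mathrm{End}(V) = \prod_\phi \mathrm{End}(V_\phi)$ and $\mathrm{Aut}(V) = \prod_\phi \mathrm{Aut}(V_\phi)$, which produces the product over $\phi \in \cI(q)$ in the statement and reduces the problem to a single irreducible $\phi$.

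Fix $\phi$ of degree $d$ and write $\lambda = \Lambda(\phi)$. The component $V_\phi$ is a module over the discrete valuation ring $\mathcal{O} = \GF[q][X]_{(\phi)}$ (a localization of the PID $\GF[q][X]$) with maximal ideal $\mathfrak{m} = (\phi)$ and residue field $\GF[q][X]/(\phi) \cong \GF[q^d]$, so I set $\tilde q = q^d$. By the structure theorem, as an $\mathcal{O}$-module $V_\phi \cong \bigoplus_i \mathcal{O}/\mathfrak{m}^{\lambda_i}$, i.e. a finite module of type $\lambda$ over a DVR whose residue field has size $\tilde q$. It therefore suffices to prove $|\mathrm{Aut}(\bigoplus_i \mathcal{O}/\mathfrak{m}^{\lambda_i})| = C(\lambda, \tilde q)$ and then substitute $\tilde q = q^{\deg\phi}$.

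For the count itself I would first determine the order of the whole endomorphism ring. Writing an endomorphism as a matrix $(f_{ij})$ with $f_{ij} \in \mathrm{Hom}(\mathcal{O}/\mathfrak{m}^{\lambda_j}, \mathcal{O}/\mathfrak{m}^{\lambda_i})$ and using $|\mathrm{Hom}(\mathcal{O}/\mathfrak{m}^a, \mathcal{O}/\mathfrak{m}^b)| = \tilde q^{\min(a,b)}$, one gets $|\mathrm{End}(V_\phi)| = \tilde q^{\sum_{i,j}\min(\lambda_i,\lambda_j)} = \tilde q^{\sum_k (\lambda'_k)^2}$, the last equality coming from $\min(\lambda_i,\lambda_j) = \#\{k : \lambda_i \ge k,\ \lambda_j \ge k\}$ summed over $i,j$. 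To extract the units I would invoke Nakayama's lemma: an endomorphism $f$ of the finite module $V_\phi$ is an automorphism iff it is surjective iff its reduction $\bar f$ on $V_\phi/\mathfrak{m}V_\phi$ is invertible. Grouping the parts by equal value, the off-diagonal blocks sending a smaller part to a strictly larger one reduce to $0$, so $\bar f$ is block triangular with the diagonal block attached to value $i$ an arbitrary element of $\Mat_{m_i(\lambda)}(\GF[\tilde q])$. Thus $f \mapsto (\text{diagonal blocks})$ is a surjective ring homomorphism $\rho: \mathrm{End}(V_\phi) \to \prod_i \Mat_{m_i(\lambda)}(\GF[\tilde q])$ whose kernel is the Jacobson radical, and $f$ is a unit iff $\rho(f)$ is. Hence $|\mathrm{Aut}(V_\phi)| = |\ker\rho|\prod_i |\GL_{m_i(\lambda)}(\GF[\tilde q])|$ with $|\ker\rho| = \tilde q^{\sum_k(\lambda'_k)^2}/\tilde q^{\sum_i m_i(\lambda)^2}$. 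Substituting $|\GL_m(\GF[\tilde q])| = \tilde q^{m^2}\prod_{j=1}^m(1 - \tilde q^{-j}) = \tilde q^{m^2}(\tilde q^{-m};\tilde q)_m$ collapses this to $\tilde q^{\sum_k(\lambda'_k)^2}\prod_i (\tilde q^{-m_i};\tilde q)_{m_i} = C(\lambda,\tilde q)$.

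The hard part will be the unit-counting step: one must verify that invertibility of $f$ is detected exactly by the diagonal blocks of $\bar f$, equivalently that $\rho$ is surjective with kernel precisely the radical. This is where the answer separates into a pure power of $\tilde q$ (the ``unipotent'' contribution of $\ker\rho$) and the $q$-Pochhammer factors (the ``semisimple'' $\GL$ contribution). By contrast, the partition identity $\sum_{i,j}\min(\lambda_i,\lambda_j) = \sum_k (\lambda'_k)^2$ is routine, and the reduction via primary decomposition and localization is standard; essentially all of the genuine work is concentrated in the finite local-ring automorphism count.
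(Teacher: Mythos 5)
Your proof is correct, but there is essentially nothing in the paper to compare it against: the paper states this proposition as Dickson's theorem, cites MacDonald for a modern exposition, and offers only a two-sentence heuristic in place of a proof --- namely that the matrices commuting with $A$ form a vector space of dimension $\sum_i (\lambda'_i)^2$, and that $\prod_i (q^{-m_i(\lambda)};q)_{m_i(\lambda)}$ is a ``correction factor'' accounting for invertibility. Your argument is a correct rendition of the standard proof from the literature (essentially the one in Macdonald's book), and it makes both halves of that heuristic precise. Writing $\tilde q = q^{\deg \phi}$, the count $|\mathrm{End}(V_\phi)| = \tilde q^{\sum_{i,j}\min(\lambda_i,\lambda_j)} = \tilde q^{\sum_k (\lambda'_k)^2}$ is the rigorous form of the dimension claim, and the reduction modulo $\mathfrak{m}$, which is block triangular, surjects onto $\prod_i \Mat_{m_i(\lambda)}(\GF[\tilde q])$, and detects units by Nakayama, shows that the correction factor is exactly $\prod_i |\GL_{m_i(\lambda)}(\GF[\tilde q])| / \tilde q^{m_i(\lambda)^2} = \prod_i (\tilde q^{-m_i(\lambda)};\tilde q)_{m_i(\lambda)}$. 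All the individual steps check out: the identification of the centralizer with $\mathrm{Aut}_{\GF[q][X]}(V)$, the primary decomposition giving the product over $\phi \in \cI(q)$, the Hom-count over the discrete valuation ring, the identity $\sum_{i,j}\min(\lambda_i,\lambda_j) = \sum_k (\lambda'_k)^2$, the equivalence ``$f$ invertible iff every diagonal block of $\bar f$ is invertible,'' and the final bookkeeping with $|\GL_m(\GF[\tilde q])| = \tilde q^{m^2}(\tilde q^{-m};\tilde q)_m$. One small remark: identifying $\ker\rho$ with the Jacobson radical is true but not needed; all your count requires is surjectivity of $\rho$ together with ``$f$ is a unit iff $\rho(f)$ is a unit,'' both of which your block-triangularity and Nakayama argument already deliver, so the step you flag as ``the hard part'' is in fact fully handled by what you wrote.
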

Given a matrix $A \sim \oplus_iM(\phi(X)^{\lambda_i})$, the vector space of
matrices $U$ such that $U A = A U$ has dimension $\sum_i
(\lambda'_i)^2$.  The quantity $\prod_i(q^{-m_i(\lambda)};q)_{m_i(\lambda)}$ can
be seen as a correction factor to specify that $U$ is invertible.

The identity element corresponds to the data
$\Lambda(X-1) = 1^n$,
and $\Lambda(\phi) = \emptyset$ for $\phi \ne X-1$.  Thus
we have 
\begin{corollary}
  \label{cor:glorder}
  Let $q$ be a prime power.  Then $|\GL_n(q)| = \prod_{k=0}^{n-1} (q^n
  - q^k) = q^{n^2}(q^{-n};q)_n$.
\end{corollary}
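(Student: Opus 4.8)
The plan is to derive both equalities in \propref{centralizer}'s natural special case, rather than from a direct count, exactly as the preceding remark suggests. Since $U^{-1} I U = I$ for every invertible $U$, the centralizer of the identity matrix $I \in \Mat_n(\GF[q])$ is all of $\GL_n(q)$, so its order is $|\GL_n(q)|$ itself. The identity corresponds to the data with $\Lambda(X-1) = 1^n$ and $\Lambda(\phi) = \emptyset$ for all other $\phi$; since $\deg(X-1) = 1$, \propref{centralizer} collapses to the single factor $|\GL_n(q)| = C(1^n, q)$.

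Next I would evaluate $C(1^n, q)$ directly from its definition. For the partition $\lambda = 1^n$ every part equals $1$, so its conjugate is the single part $\lambda' = (n)$, whence $\sum_i (\lambda'_i)^2 = n^2$. The only nonzero multiplicity is $m_1(\lambda) = n$, so in the product $\prod_{i \ge 1}(q^{-m_i(\lambda)};q)_{m_i(\lambda)}$ every factor with $i \ge 2$ reduces to the empty product $(1;q)_0 = 1$ and only the $i=1$ factor survives. This yields $|\GL_n(q)| = q^{n^2}(q^{-n};q)_n$, which is the right-hand equality.

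Finally I would recover the classical product form by expanding the $q$-Pochhammer symbol. By definition $(q^{-n};q)_n = \prod_{k=0}^{n-1}(1 - q^{-n}q^{k}) = \prod_{k=0}^{n-1}(1 - q^{k-n})$, and writing $q^{n^2} = \prod_{k=0}^{n-1} q^{n}$ lets me absorb one factor of $q^n$ into each term, giving $q^{n^2}(q^{-n};q)_n = \prod_{k=0}^{n-1} q^{n}(1 - q^{k-n}) = \prod_{k=0}^{n-1}(q^n - q^k)$.

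The content here is almost entirely bookkeeping; the only step demanding any care is the Pochhammer manipulation with the negative base $q^{-n}$, where the shifted exponents $q^{k-n}$ must be tracked correctly so that the $q^{n^2}$ redistributes cleanly. As an independent check on the product $\prod_{k=0}^{n-1}(q^n - q^k)$, one can instead count ordered bases of $\GF[q]^n$: the $(k+1)$-st vector may be any of the $q^n - q^k$ vectors outside the span of the first $k$, and ordered bases are precisely the columns of invertible matrices. This confirms the left-hand equality without invoking \propref{centralizer}, and the agreement of the two expressions is exactly the identity verified above.
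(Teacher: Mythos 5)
Your proposal is correct and follows the paper's own route exactly: the paper likewise obtains the corollary by noting that the identity element has data $\Lambda(X-1) = 1^n$ and centralizer all of $\GL_n(q)$, then applying \propref{centralizer}; you merely spell out the evaluation of $C(1^n,q)$ and the Pochhammer expansion that the paper leaves implicit. The ordered-basis count you add is a nice independent check but does not change the substance of the argument.
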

Note that $|\GL_n(q)|/q^{n^2} = \prod_{k=1}^{n}(1-q^{-k})$, and the
infinite product\linebreak $\prod_{k=1}^{\infty}(1-q^{-k})$ converges to some
$\gamma_q > 0$.  This shows that $|\GL_n(q)| \sim \gamma_q q^{n^2}$.
The approximate value of $\gamma_2 \approx 0.28878809508660242$.

\section{Powers}
\label{sec:powers}
We now analyze $M(\phi^r)$, where $\phi$ is an irreducible
monic polynomial, and obtain a characterization of squares of
matrices. We start off with a few lemmas which allow us to compute the
effect of raising to the $r$-th power for $r$ a positive integer.

If $\phi(X)$ is a monic polynomial of degree $d$ then $M(X)$ is the matrix of
multiplication by $X$ with respect to the ordered basis
$1,X,\dots,X^{d-1}$ where multiplication of polynomials is taken
(mod $\phi(X)$).  If we write down the matrix of multiplication by $X$
with respect to another basis, then it is similar to $M(X)$.

In the following, if $\phi(X)=(X-\alpha_1)\cdots (X-\alpha_n)$ is a polynomial with roots $\alpha_1,
\dots, \alpha_n$ in the algebraic closure, and $r > 0$ an
integer, denote by $\phi^{(r)}(X)=(X-\alpha_1^r)\cdots (\alpha-\alpha^r_n)$ roots are the
$r$-th powers $\alpha_1^r, \dots, \alpha_n^r$  of the roots of
$\phi$.  If $\phi$ is defined over $\GF[q]$ then so is $\phi^{(r)}$.
When $\phi$ is a polynomial over
$\GF[2]$, we have $\phi^{(2)}(X) = \phi(X)$.

\begin{lemma}[The Chinese Remainder Theorem]
  \label{lem:crt}
  Let $\phi(X)$ and $\psi(X)$ denote relatively prime monic
  polynomials.  Then
  \begin{displaymath}
    M(\phi(X) \psi(X)) \sim M(\phi(X)) \oplus M(\psi(X)).
  \end{displaymath}
\end{lemma}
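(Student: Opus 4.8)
The plan is to identify each companion matrix with the operator ``multiplication by $X$'' on an appropriate quotient ring, and then to read off the similarity from the ring-theoretic Chinese Remainder Theorem. Concretely, recall (as noted right after the definition of the companion matrix) that $M(\phi)$ is the matrix, with respect to the basis $1, X, \dots, X^{\deg\phi - 1}$, of the $K$-linear map $m_\phi \colon f \mapsto Xf$ on the quotient $V_\phi := K[X]/(\phi)$. Likewise $M(\phi\psi)$ represents multiplication by $X$ on $V_{\phi\psi} = K[X]/(\phi\psi)$, and the block-diagonal matrix $M(\phi) \oplus M(\psi)$ represents the map $m_\phi \oplus m_\psi$ on the external direct sum $V_\phi \oplus V_\psi$. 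Since $\deg(\phi\psi) = \deg\phi + \deg\psi$, the two matrices have the same size, so the assertion even makes sense.

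First I would invoke the ring-theoretic CRT: because $\phi$ and $\psi$ are relatively prime, the reduction map
\[
  \theta \colon K[X]/(\phi\psi) \longrightarrow K[X]/(\phi) \times K[X]/(\psi), \qquad f \bmod \phi\psi \longmapsto (f \bmod \phi,\, f \bmod \psi)
\]
is an isomorphism of rings (surjectivity is the content of CRT, and a dimension count against $\deg(\phi\psi)$ forces bijectivity). Second, I would observe that $\theta$ is in particular a $K$-vector-space isomorphism $V_{\phi\psi} \to V_\phi \oplus V_\psi$. Third---the crucial intertwining step---since $\theta$ is a ring homomorphism fixing $K$, it commutes with multiplication by the ring element $X$: for every $f$ we have $\theta(Xf) = X\,\theta(f)$, where $X$ acts componentwise on the right. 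In operator language this reads $\theta \circ m_{\phi\psi} = (m_\phi \oplus m_\psi) \circ \theta$, i.e. the two multiplication-by-$X$ maps are conjugate through the vector-space isomorphism $\theta$.

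Finally, conjugate operators have similar matrices: choosing the standard monomial bases on each side and letting $U$ be the matrix of $\theta$, the intertwining relation becomes exactly $M(\phi\psi) = U^{-1}\,(M(\phi)\oplus M(\psi))\,U$, which is the claim. I do not expect a genuine obstacle here; the only point that needs care is the intertwining step, but it is immediate once one notes that ``multiply by $X$'' is carried out inside the rings and $\theta$ is a ring map, so it is automatically respected. As an alternative route that avoids modules, one could argue via rational canonical form (\propref{conjugacy}): both $M(\phi\psi)$ and $M(\phi)\oplus M(\psi)$ have characteristic polynomial $\phi\psi$, and both are cyclic---the former because a companion matrix is nonderogatory, the latter because the minimal polynomial of a direct sum is the least common multiple $\mathrm{lcm}(\phi,\psi) = \phi\psi$ when $\phi,\psi$ are coprime---so, having equal minimal and characteristic polynomials, they carry identical data $\Lambda$ and hence lie in the same conjugacy class.
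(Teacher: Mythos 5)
Your proposal is correct, and its main line of argument is at heart the same as the paper's: both identify $M(\phi\psi)$ with multiplication by $X$ on $K[X]/(\phi\psi)$ and split that space using coprimality. The difference is one of packaging. The paper works \emph{internally}: from a Bezout identity $u\phi + v\psi = 1$ it constructs explicit bases $B_1 = ((X^i v \bmod \phi)\psi)_i$ and $B_2 = ((X^i u \bmod \psi)\phi)_i$ of two $X$-invariant subspaces with $V_1 \oplus V_2 = K[X]/(\phi\psi)$, chosen so that the matrix of multiplication by $X$ is literally $M(\phi)\oplus M(\psi)$ in block form, thereby exhibiting the change of basis concretely. You work \emph{externally}: you quote the ring-theoretic CRT isomorphism $\theta$ onto $K[X]/(\phi)\times K[X]/(\psi)$ and observe that, being a $K$-algebra map, $\theta$ intertwines the two multiplication-by-$X$ operators, so their matrices are similar. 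These are the same proof at different levels of abstraction: the paper's $V_1,V_2$ are exactly the $\theta$-preimages of the two factors, and its bases $B_1,B_2$ are the $\theta$-preimages of the monomial bases (e.g.\ $(X^i v \bmod\phi)\psi \equiv X^i \pmod{\phi}$ and $\equiv 0 \pmod{\psi}$), so the paper is in effect constructing $\theta^{-1}$ by hand. Your second route, via rational canonical form, is genuinely different: both matrices are cyclic with characteristic polynomial $\phi\psi$ (for the direct sum because $\mathrm{lcm}(\phi,\psi)=\phi\psi$ by coprimality), hence carry the same data $\Lambda$ and lie in one conjugacy class by \propref{conjugacy}. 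That argument is legitimate within the paper's logical structure, since \propref{conjugacy} is quoted from the literature rather than derived from \lemref{crt}, but it buys brevity at the cost of invoking the full uniqueness of rational canonical form---a much heavier tool whose standard proof itself runs through exactly this kind of CRT decomposition.
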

\begin{proof}
  Since $\phi(X),\psi(X)$ are relatively prime, there are polynomials $u(X), v(X)$
  such that
  \begin{displaymath}
    u(x) \phi(X) + v(X) \psi(X) = 1.
  \end{displaymath}
  Let $d=\deg(\phi), e=\deg(\psi)$, and
  $B_1 := ((X^i v(X)\bmod{\phi(X)})\psi(X): i=0,\dots,d-1)$ and
  $B_2 := ((X^i u(X)\bmod{\psi(X)})\phi(X): i=0,\dots,e-1)$.  Let
  $V_1$ be the span of $B_1$ and $V_2$ the span of $B_2$. Then
  the space spanned by $1,\dots,X^{d+e-1}$ is $V_1 \oplus V_2$, and
  multiplication by $X$ leaves $V_1$ and $V_2$ invariant.
  Furthermore, 
  the matrix of $X$ with respect to $B_1$ is $M(\phi)$ and with
  respect to $B_2$ is $M(\psi)$.
\end{proof}
\begin{corollary}
  \label{cor:split}
  Let $\phi(X)$ be a monic polynomial whose factorization  in the
  algebraic closure is $\phi(X) = \prod_i (X - \alpha_i)^{r_i}$, for
  distinct $\alpha_i$, and $r_i > 0$.  Then
  \begin{displaymath}
    M(\phi(X)) \sim \bigoplus_i \left(\alpha_i I_{r_i} + M(X^{r_i})\right),
  \end{displaymath}
  where $I_{r_i}$ is an $r_i \times r_i$ identity matrix and
  similarity is over the algebraic closure.
\end{corollary}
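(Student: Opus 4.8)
The plan is to reduce the statement to two facts: a splitting into blocks indexed by the distinct roots, followed by the analysis of a single block. First I would observe that since the $\alpha_i$ are distinct, the factors $(X-\alpha_i)^{r_i}$ are pairwise relatively prime monic polynomials over $L = \overline{K}$. Applying \lemref{crt} inductively---peeling off one factor at a time, each of which is coprime to the product of the remaining ones---gives $M(\phi(X)) \sim \bigoplus_i M\bigl((X-\alpha_i)^{r_i}\bigr)$, with the similarity taking place over $L$. This disposes of the product structure and leaves a single claim to verify for each root, namely $M\bigl((X-\alpha)^r\bigr) \sim \alpha I_r + M(X^r)$.

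For that claim I would work inside the quotient ring $R = L[X]/((X-\alpha)^r)$, on which $M\bigl((X-\alpha)^r\bigr)$ is by definition the matrix of multiplication by $X$ with respect to the monomial basis $1, X, \dots, X^{r-1}$. Writing $Y = X - \alpha$, the elements $1, Y, \dots, Y^{r-1}$ form a second basis of $R$, since $Y^r \equiv 0$ and these $r$ elements are linearly independent. In this shifted basis, multiplication by $X = \alpha + Y$ splits as multiplication by the scalar $\alpha$---which contributes $\alpha I_r$---plus multiplication by $Y$. Because $R \cong L[Y]/(Y^r)$ under the renaming $X \mapsto Y$, the matrix of multiplication by $Y$ in the basis $1, Y, \dots, Y^{r-1}$ is, entry for entry, the same companion matrix that multiplication by $X$ produces on $L[X]/(X^r)$ in the basis $1, X, \dots, X^{r-1}$; that is, it is exactly $M(X^r)$. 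Hence the matrix of multiplication by $X$ in the shifted basis is $\alpha I_r + M(X^r)$.

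Finally I would invoke the principle already noted before \lemref{crt}: the matrix of multiplication by $X$ computed in any basis of $R$ is similar to the one computed in the monomial basis. Applying this to the two bases $\{X^j\}$ and $\{Y^j\}$ yields $M\bigl((X-\alpha)^r\bigr) \sim \alpha I_r + M(X^r)$, and substituting this into the block decomposition from the first step completes the proof.

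The step that needs the most care is the identification of the nilpotent block with $M(X^r)$: one must be sure that multiplication by $Y = X-\alpha$ really reproduces the companion matrix of $X^r$ and not, say, its transpose or a reversed-basis variant. This is handled by reading off the action on the ordered basis $1, Y, \dots, Y^{r-1}$ using precisely the convention fixed in the definition of $M(\phi)$, so that the renaming $X \leftrightarrow Y$ makes the two matrices literally identical rather than merely similar. Everything else---the coprimality of the factors and the inductive application of the Chinese Remainder Theorem---is routine, and the passage to the algebraic closure is required only because the $\alpha_i$, and hence the complete splitting of $\phi$, live there.
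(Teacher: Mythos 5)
Your proposal is correct and follows essentially the same route as the paper's proof: reduce via \lemref{crt} to the single-factor case $\phi(X)=(X-\alpha)^r$, then write $X = \alpha + (X-\alpha)$ so that multiplication by $X$ becomes $\alpha I_r$ plus multiplication by $X-\alpha$, which in the shifted basis $1, X-\alpha, \dots, (X-\alpha)^{r-1}$ is exactly $M(X^r)$. The only difference is that you spell out the change-of-basis argument that the paper compresses into the word ``obviously,'' which is a welcome elaboration but not a different proof.
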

\begin{proof}
  By \lemref{crt} it suffices to prove the statement when $\phi(X) =
  (X-\alpha)^r$ for some $\alpha \in \overline{\GF[2]}$ and positive
  integer $r$.  However, we have $X = \alpha + (X-\alpha)$, so
  multiplication by $X$ is given by the matrix $\alpha I_r$ plus the
  matrix of multiplication by $(X-\alpha)$.  The latter is obviously
  similar to the matrix of multiplication by $X$ $\bmod\ X^r$.
\end{proof}
\begin{lemma}
  \label{lem:power}
  Let $n,r$ be positive integers.  Then
  \begin{displaymath}
    M(X^n)^r \sim [n \bmod r] M\left(X^{\lceil n/r \rceil}\right) \oplus
    [r-(n \bmod r)] M\left(X^{\lfloor n/r \rfloor}\right).
  \end{displaymath}
\end{lemma}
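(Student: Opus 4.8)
The plan is to interpret $M(X^n)$ as a concrete operator and read off the Jordan structure of its $r$-th power directly. By the definition of the companion matrix, $M(X^n)$ is the matrix of multiplication by $X$ on $V := K[X]/(X^n)$ with respect to the ordered monomial basis $1, X, \dots, X^{n-1}$; hence $M(X^n)^r$ is the matrix of multiplication by $X^r$, i.e. the operator $T\colon v \mapsto X^r v$ on $V$. Everything then reduces to decomposing $V$ into $T$-invariant cyclic subspaces and identifying the block on each of them as some $M(X^d)$.

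First I would observe that $T$ acts on the monomial basis by $X^i \mapsto X^{i+r}$, where $X^{i+r} = 0$ once $i+r \ge n$; thus $T$ shifts the exponent by $r$ and so preserves each residue class of exponents modulo $r$. Partition the basis accordingly: for $c \in \{0,1,\dots,r-1\}$ let $W_c$ be the span of those $X^i$ with $0 \le i < n$ and $i \equiv c \pmod r$. Then $V = \bigoplus_{c=0}^{r-1} W_c$ with each $W_c$ being $T$-invariant. Setting $d_c := \dim W_c$, the vector $X^c$ generates $W_c$ as a $T$-module, with $X^c, TX^c, \dots, T^{d_c-1}X^c$ running through the basis of $W_c$ and $T^{d_c}X^c = 0$. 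Hence $T|_{W_c}$ is cyclic with characteristic (and minimal) polynomial $X^{d_c}$, so its rational canonical form is the companion matrix $M(X^{d_c})$.

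The remaining step is pure bookkeeping. The count $d_c$ equals $\lceil (n-c)/r \rceil$, the number of integers $c, c+r, c+2r, \dots$ lying in $[0,n)$. Writing $n = qr + s$ with $0 \le s < r$, so that $s = n \bmod r$ and $q = \lfloor n/r\rfloor$, a short calculation gives $d_c = q+1 = \lceil n/r \rceil$ for $0 \le c < s$ and $d_c = q = \lfloor n/r\rfloor$ for $s \le c < r$. Collecting the $s$ blocks of size $q+1$ and the $r-s$ blocks of size $q$, and using that similarity is insensitive to the order of direct summands, yields $M(X^n)^r \sim [s]\,M(X^{\lceil n/r\rceil}) \oplus [r-s]\,M(X^{\lfloor n/r\rfloor})$, which is the assertion.

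I expect the only genuinely delicate points to be the degenerate cases rather than the main argument. When $r \ge n$ some classes are empty, producing blocks $M(X^0) = M(1)$, which the stated $0 \times 0$ convention absorbs harmlessly; when $s = 0$ the two block sizes coincide and the first summand is vacuous. As an independent consistency check one can instead compute ranks: since $M(X^n)$ is a single nilpotent block, $\operatorname{rank}\big(M(X^n)^{rk}\big) = \max(n - rk, 0)$, and feeding these into the standard formula that the number of Jordan blocks of $M(X^n)^r$ of size $\ge j$ equals $\operatorname{rank}(T^{\,r(j-1)}) - \operatorname{rank}(T^{\,rj})$ reproduces exactly $s$ blocks of size $\lceil n/r\rceil$ and $r-s$ blocks of size $\lfloor n/r\rfloor$.
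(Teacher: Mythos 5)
Your proposal is correct and follows essentially the same route as the paper: both interpret $M(X^n)^r$ as multiplication by $X^r$ on $K[X]/(X^n)$ and decompose the monomial basis into the chains $X^c \mapsto X^{c+r} \mapsto \cdots$ indexed by residues $c \bmod r$, counting $\lceil n/r\rceil$ blocks for $c < n \bmod r$ and $\lfloor n/r\rfloor$ blocks otherwise. Your write-up merely makes the invariant subspaces and the degenerate cases explicit, which the paper leaves implicit.
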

\begin{proof}
  The matrix $M(X^n)$ is the matrix of multiplication by $X$ modulo
  $X^n$ with respect to the basis $1,X, \dots, X^{n-1}$.  Thus
  $M(X^n)^r$ is the matrix of multiplication by $X^r$ with respect to
  the same basis.   This maps $X^i \mapsto X^{i+r} \mapsto \cdots
  \mapsto X^{i + sr} \mapsto 0$, where
  $s = \lfloor (n-1-i)/r \rfloor$.  When $i < n \bmod r$ we have $s+1 =
  \lfloor (n+r-1)/r \rfloor = \lceil n/r \rceil$, and $\lfloor n/r \rfloor$ otherwise.
\end{proof}
\begin{corollary}
  \label{cor:power2}
  Let $q$ be a power of $2$, $r$ a positive integer, and
   $\phi \in \cI(q)$.  Then $M(\phi(X)^r)^2 \sim
  M(\phi^{(2)}(X)^{\lceil r/2 \rceil}) \oplus M(\phi^{(2)}(X)^{\lfloor r/2 \rfloor}).$
\end{corollary}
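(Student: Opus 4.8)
The plan is to pass to the algebraic closure, reduce to single Jordan-type blocks, square those blocks using characteristic~$2$, and then reassemble and descend. First I would work over $\overline{\GF[q]}$, where the irreducible $\phi$ splits as $\phi(X) = \prod_{i=1}^{d}(X-\alpha_i)$ with $d = \deg\phi$ and the $\alpha_i$ pairwise distinct, since finite fields are perfect and hence $\phi$ is separable. Then $\phi(X)^r = \prod_i (X-\alpha_i)^r$, and \corref{split} gives the decomposition
\[
  M(\phi(X)^r) \sim \bigoplus_{i=1}^{d}\left(\alpha_i I_r + M(X^r)\right).
\]

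Next I would square this block by block. The key point is that $\alpha_i I_r$ and $M(X^r)$ commute and that we are in characteristic~$2$, so the cross term drops out:
\[
  \left(\alpha_i I_r + M(X^r)\right)^2 = \alpha_i^2 I_r + 2\alpha_i M(X^r) + M(X^r)^2 = \alpha_i^2 I_r + M(X^r)^2.
\]
Now \lemref{power}, applied with the roles $n = r$ and exponent $2$, yields $M(X^r)^2 \sim M(X^{\lceil r/2\rceil}) \oplus M(X^{\lfloor r/2\rfloor})$ (one checks this uniform form in the even and odd cases). Since $\alpha_i^2 I_r$ is scalar it commutes with any conjugating matrix and splits as $\alpha_i^2 I_{\lceil r/2\rceil} \oplus \alpha_i^2 I_{\lfloor r/2\rfloor}$, giving
\[
  \left(\alpha_i I_r + M(X^r)\right)^2 \sim \left(\alpha_i^2 I_{\lceil r/2\rceil} + M(X^{\lceil r/2\rceil})\right) \oplus \left(\alpha_i^2 I_{\lfloor r/2\rfloor} + M(X^{\lfloor r/2\rfloor})\right).
\]

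To reassemble, I would take the direct sum over $i$ and regroup the summands (a permutation of blocks, hence a similarity) so that all the $\lceil r/2\rceil$-pieces are collected together and likewise for the $\lfloor r/2\rfloor$-pieces. The squaring map $\alpha \mapsto \alpha^2$ is injective in characteristic~$2$, so the $\alpha_i^2$ are again pairwise distinct and are precisely the roots of $\phi^{(2)}$; reading \corref{split} backwards then identifies $\bigoplus_i\bigl(\alpha_i^2 I_s + M(X^s)\bigr)$ with $M(\phi^{(2)}(X)^s)$ for $s = \lceil r/2\rceil$ and for $s = \lfloor r/2\rfloor$. This establishes the asserted similarity over $\overline{\GF[q]}$.

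The remaining point, and the only place needing care, is descent to $\GF[q]$. Both matrices in the statement have entries in $\GF[q]$ — the left side is a power of a matrix over $\GF[q]$, and $\phi^{(2)}$ is again defined over $\GF[q]$ (as noted just before \lemref{crt}) — so by the remark that similarity is unaffected by field extension, the similarity over $\overline{\GF[q]}$ descends to one over $\GF[q]$. I expect the block-squaring and the $\lceil\cdot\rceil/\lfloor\cdot\rfloor$ bookkeeping through \lemref{power} to be entirely routine; the genuine content is the characteristic-$2$ vanishing of the cross term together with this extension-invariance of similarity.
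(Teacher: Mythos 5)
Your proof is correct and takes essentially the same route as the paper's: decompose $M(\phi(X)^r)$ via \corref{split}, square block-by-block using characteristic $2$ to kill the cross term, apply \lemref{power} with exponent $2$, and reassemble by reading \corref{split} backwards. The extra points you spell out---separability of $\phi$, distinctness of the $\alpha_i^2$, and descent of similarity from $\overline{\GF[q]}$ to $\GF[q]$---are left implicit in the paper (the descent being covered by its earlier remark that similarity is unaffected by field extension), so there is no substantive difference.
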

\begin{proof}
  Let $\alpha_1, \dots, \alpha_d$ be the roots of $\phi(X)$.
  By \corref{split} we have
  \begin{displaymath}
    \begin{aligned}
      M(\phi(X)^r)^2 & \sim \bigoplus \left(\alpha_i I + M(X^r)\right)^2
       = \bigoplus_{i=1}^d \left(\alpha_i^2 I + M(X^r)^2\right) \\
       & = \bigoplus_{i=1}^d \left(\alpha_i^2 I + M(X^{\lfloor r/2 \rfloor})\right)
      \oplus \bigoplus_{i=1}^d \left(\alpha_i^2 I + M(X^{\lceil r/2 \rceil})\right)\\
      & \sim M\left(\phi^{(2)}(X)^{\lceil r/2 \rceil}\right) \oplus M\left(\phi^{(2)}(X)^{\lfloor r/2 \rfloor}\right).
    \end{aligned}
  \end{displaymath}
  The first equality follows because the characteristic is 2.
  The second equality follows from \lemref{power}, and the last line
  from \corref{split}.
\end{proof}
\begin{corollary}
  \label{cor:image}
  If a matrix $A \in \Mat_n(\GF[q])$, where $q$ is a power of $2$, is in
  the conjugacy class with standard representative specified by $\Lambda :
  \cI(q) \rightarrow \cP$, then the conjugacy class containing $A^2$
  has its standard representative specified by
  $\cM :
  \cI(q) \rightarrow \cP$, where $m_i(\cM(\phi)) = 2 m_{2i}(\Lambda(\phi^{(2)}))
  + m_{2i-1}(\Lambda(\phi^{(2)})) + m_{2i+1}(\Lambda(\phi^{(2)}))$
 for all $\phi$ and $i \ge 1$.
\end{corollary}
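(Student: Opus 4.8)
The plan is to start from the rational canonical form of $A$ furnished by \propref{conjugacy}, square it block by block using \corref{power2}, and then regroup the resulting companion blocks by their associated irreducible polynomial so that the multiplicities of the new partition can be read off by an elementary tally.

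First I would write the standard representative as $\bigoplus_{\phi \in \cI(q)} \bigoplus_i M(\phi^{\Lambda(\phi)_i})$, so that
\[
  A^2 \sim \bigoplus_{\phi \in \cI(q)} \bigoplus_i M\!\left(\phi^{\Lambda(\phi)_i}\right)^2 .
\]
Applying \corref{power2} to the block with $r = \Lambda(\phi)_i$ replaces $M(\phi^{r})^2$ by $M\!\left((\phi^{(2)})^{\lceil r/2\rceil}\right) \oplus M\!\left((\phi^{(2)})^{\lfloor r/2\rfloor}\right)$, where a factor $M(\psi^{0}) = M(1)$ (which arises only from $r=1$, via the floor) is an empty block and may be discarded.

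Next I would regroup the summands by the irreducible polynomial occurring in them. Because $q$ is a power of $2$, the Frobenius $\alpha \mapsto \alpha^2$ is a bijection of $\overline{\GF[q]}$, hence $\phi \mapsto \phi^{(2)}$ is a bijection of $\cI(q)$; consequently distinct source irreducibles contribute blocks for distinct target irreducibles, so there is no interaction between different irreducibles and $A^2$ is again exhibited in standard form. The blocks of $A^2$ belonging to a given $\psi \in \cI(q)$ come precisely from the unique $\phi$ with $\phi^{(2)} = \psi$. Over $\GF[2]$, which is the case of interest, $\phi^{(2)} = \phi$, so the $\psi$-blocks come from $\Lambda(\psi) = \Lambda(\psi^{(2)})$, and $\cM(\psi)$ is identified as the partition whose parts are exactly $\lceil r/2\rceil$ and $\lfloor r/2\rfloor$ as $r$ ranges over the parts of $\Lambda(\psi^{(2)})$, the zero parts being dropped.

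Finally I would count multiplicities by asking, for fixed $i \ge 1$, which parts $r$ produce a part equal to $i$: since $\lceil r/2\rceil = i$ exactly when $r \in \{2i-1,\,2i\}$ and $\lfloor r/2\rfloor = i$ exactly when $r \in \{2i,\,2i+1\}$, a part $r = 2i$ yields two parts of size $i$, while $r = 2i-1$ and $r = 2i+1$ each yield one, and no other $r$ contributes. Summing these gives
\[
  m_i(\cM(\phi)) = 2\,m_{2i}\!\left(\Lambda(\phi^{(2)})\right) + m_{2i-1}\!\left(\Lambda(\phi^{(2)})\right) + m_{2i+1}\!\left(\Lambda(\phi^{(2)})\right),
\]
as claimed. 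I expect the only real obstacle to be the bookkeeping in the regrouping step, namely verifying that squaring the blocks for one irreducible cannot spill into the blocks for another; this is exactly what the bijectivity of $\phi \mapsto \phi^{(2)}$ in characteristic $2$ guarantees. The one minor subtlety is the boundary case $r=1$, where the floor produces an empty block that must be absent from $\cM(\phi)$; this is consistent with the tally, since for $i=1$ the surviving contributions are $2m_2$ and $m_1$ (the ceiling of $r=1$) together with $m_3$.
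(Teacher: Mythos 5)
Your proof is correct and follows essentially the same route as the paper: decompose $A$ into companion blocks via \propref{conjugacy}, square each block using \corref{power2}, and tally which parts $r$ produce a part equal to $i$ (namely $r=2i$ contributing twice and $r=2i\pm 1$ once each). Your added care about regrouping blocks under the bijection $\phi \mapsto \phi^{(2)}$ and about the empty block arising from $r=1$ only makes explicit bookkeeping that the paper's proof leaves implicit.
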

\begin{proof}
  If
\begin{displaymath}
  A \sim \bigoplus_{\phi \in \cI(q)} \bigoplus_j M\left(\phi(X)^{\Lambda(\phi)_j}\right),
\end{displaymath}
then
\begin{displaymath}
  A^2 \sim \bigoplus_{\phi \in \cI(q)} \bigoplus_j M\left(\phi^{(2)}(X)^{\Lambda(\phi)_j}\right)^2.
\end{displaymath}
It thus suffices to show that for all $\phi \in \cI(q)$, and $\lambda
\in \cP$
\begin{displaymath}
  \bigoplus_j M\left(\phi(X)^{\lambda_j}\right)^2 \sim \bigoplus_j M\left(\phi^{(2)}(X)^{\mu_j}\right),
\end{displaymath}
where $\mu \in \cP$ satisfies $m_i(\mu) = 2 m_{2i}(\lambda)
+ m_{2i-1}(\lambda) + m_{2i+1}(\lambda)$.
By \corref{power2} we have
\begin{displaymath}
  M\left(\phi(X)^{\lambda_j}\right)^2 \sim  M\left(\phi^{(2)}(X)^{\lceil \lambda_j/2 \rceil}\right)
  \oplus M\left(\phi^{(2)}(X)^{\lfloor \lambda_j/2 \rfloor}\right).
\end{displaymath}
If $\lambda_j = 2i$ then it contributes 2 to the multiplicity
$m_i(\mu)$.  Otherwise it contributes 1, thus giving the assertion.
\end{proof}
By enumerating all partitions of $n$ one can produce the set of
partitions of the form $\cM(\phi)$ as in the above corollary.  Counting
these partitions produces the sequence
$1,1,2,3,4,5,7,10,\allowbreak 13,16,21,28,35,43,55,70, \dots$ which is 
sequence A006950 in the OEIS.  There are many comments there about
classes of partitions.  They include: ``Also the
number of partitions of $n$ in which all odd parts occur with
multiplicity 1.''  None of the above partitions fell
into any of the classes referred to, but the conjugates did.  This
yielded \propref{inverse} below.  The sequence above is
also the sequence of coefficients of one of Ramanujan's mock
$\vartheta$ functions whose
generating function is
\begin{displaymath}
  \vartheta(z) := \prod_{k > 0} \frac{1+z^{2k-1}}{1-z^{2k}}.
\end{displaymath}
\begin{proposition}
  \label{prop:inverse}
  Let $\Delta : \cP \rightarrow \cP$, be defined by
  $m_i(\Delta(\lambda)) = 2 m_{2i}(\lambda) + m_{2i-1}(\lambda) +
  m_{2i+1}(\lambda)$, for all $i$.
  Then $\mu$ is in the image of
  $\Delta$ if and only if $m_{2i-1}(\mu') \le 1$ for $i \ge 1$.
\end{proposition}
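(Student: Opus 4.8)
The plan is to reinterpret $\Delta$ as a concrete splitting operation on Young diagrams and then characterize its image by a parity argument. First I would unwind the defining formula $m_i(\Delta(\lambda)) = 2m_{2i}(\lambda) + m_{2i-1}(\lambda) + m_{2i+1}(\lambda)$ at the level of individual parts: a part of $\lambda$ equal to $2i$ contributes two parts equal to $i$, while a part equal to $2i-1$ contributes one part equal to $i=\lceil(2i-1)/2\rceil$ and one equal to $i-1=\lfloor(2i-1)/2\rfloor$, with the convention that a resulting $0$ is discarded (this is what makes the $m_1$ case behave correctly). In other words $\Delta$ replaces each part $a$ by the two parts $\lceil a/2\rceil$ and $\lfloor a/2\rfloor$, exactly the splitting already recorded in \corref{power2} and \corref{image}. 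Consequently $\mu$ lies in the image of $\Delta$ if and only if the parts of $\mu$ can be grouped into pairs $\{c,c'\}$ with $c\ge c'\ge 1$ and $c-c'\le 1$, together with an arbitrary number of singleton parts equal to $1$ (these being the pairs $\{1,0\}$ arising from a part of size $1$ in $\lambda$); given such a grouping one recovers $\lambda$ by setting each part to the sum $c+c'$ of its group.

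Next I would rewrite the target condition in terms of the parts of $\mu$ itself. Since conjugation is an involution, applying $m_i(\lambda)=\lambda'_i-\lambda'_{i+1}$ to $\lambda=\mu'$ gives $m_k(\mu')=\mu_k-\mu_{k+1}$, so the hypothesis $m_{2i-1}(\mu')\le 1$ for all $i\ge 1$ is literally the statement that $\mu_{2i-1}-\mu_{2i}\le 1$ for all $i$; that is, each odd-indexed part exceeds the following even-indexed part by at most one. It then remains to prove that $\mu$ admits a grouping as above if and only if $\mu_{2i-1}-\mu_{2i}\le 1$ for every $i$.

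For the ``if'' direction I would use the canonical grouping into consecutive pairs $\{\mu_1,\mu_2\},\{\mu_3,\mu_4\},\dots$. Monotonicity gives $\mu_{2i-1}\ge\mu_{2i}$, and the hypothesis bounds the gap by $1$, so each pair is legal when both entries are positive and degenerates to a singleton $1$ when $\mu_{2i}=0$ (forcing $\mu_{2i-1}\le 1$). Setting $a_i=\mu_{2i-1}+\mu_{2i}$ then yields a genuine partition $\lambda$ (non-increasing because $\mu$ is) with $\Delta(\lambda)=\mu$.

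The harder direction, which I expect to be the crux, is to show that the grouping condition forces the part inequalities, since a priori the grouping need not be the consecutive one and one must rule out clever non-consecutive pairings. I would argue by contrapositive: suppose $\mu_{2i-1}-\mu_{2i}\ge 2$, write $t=\mu_{2i-1}$, and let $S$ be the set of parts of $\mu$ that are $\ge t$. Because $\mu_{2i}\le t-2$, there are exactly $2i-1$ parts $\ge t$ and no part equals $t-1$, so $|S|=2i-1$ is odd and $S$ is also the set of parts $\ge t-1$. Now in any valid grouping, each element of $S$ is $\ge t\ge 2$, hence is not a singleton $1$, and its partner differs from it by at most $1$ and is therefore $\ge t-1$, hence (no part equals $t-1$) itself $\ge t$ and so again in $S$. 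Thus $S$ is a disjoint union of pairs and must have even cardinality, contradicting $|S|=2i-1$. This closure-under-pairing-plus-parity obstruction is the only real content; the rest is bookkeeping.
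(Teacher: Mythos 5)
Your proof is correct, and it takes a genuinely different route from the paper's. The paper never leaves the language of multiplicities and conjugates: for necessity it telescopes the defining formula into the congruence $\mu'_i \equiv m_{2i-1}(\lambda) \pmod{2}$ and deduces that any repeated part of $\mu'$ must be even; for sufficiency it solves directly for the multiplicities of a preimage, setting $b_{2i-1}\in\{0,1\}$ with $b_{2i-1}\equiv \mu'_i \pmod{2}$ and $b_{2i}=(m_i(\mu)-b_{2i-1}-b_{2i+1})/2$, then checking integrality and nonnegativity. You instead recast $\Delta$ as the part-splitting map $a\mapsto\{\lceil a/2\rceil,\lfloor a/2\rfloor\}$ (making the connection to \corref{power2} and \corref{image} explicit), so that preimages of $\mu$ correspond exactly to groupings of its parts into near-equal pairs plus singleton $1$'s, and you translate the criterion through $m_k(\mu')=\mu_k-\mu_{k+1}$ into the condition $\mu_{2i-1}-\mu_{2i}\le 1$ on consecutive parts. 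Sufficiency then becomes the consecutive pairing $\{\mu_1,\mu_2\},\{\mu_3,\mu_4\},\dots$, and necessity---precisely the step where a careless argument would only treat the consecutive pairing---is handled by your parity obstruction: the set $S$ of parts $\ge t=\mu_{2i-1}$ has odd cardinality $2i-1$, yet is closed under any legal pairing (its elements are $\ge t\ge 2$, so never singletons, and their partners cannot equal $t-1$ since no part does), hence would have to have even cardinality. This quantification over \emph{all} pairings is exactly what the paper's congruence accomplishes algebraically, so the two arguments are parallel in content but not in form. What each buys: yours is more conceptual and self-explanatory, exhibiting the structure of every preimage at once; the paper's stays in the multiplicity coordinates that mesh with the centralizer formula $C(\lambda,q)$ and with the square-root algorithm remarked on in \secref{generating}. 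That remark loses nothing under your approach: one can check that, under the stated hypothesis on $\mu$, your consecutive-pair construction and the paper's $b_i$ construction produce the same preimage $\lambda$.
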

\begin{proof}
  Let $\mu = \Delta(\lambda)$. We have
  $\mu'_i = \sum_{j \ge i} m_j(\mu)$.
  Substituting in the value of $m_j(\mu)$ we obtain
  \begin{displaymath}
    \mu'_i = \sum_{j \ge i}  (2 m_{2j}(\lambda) + m_{2j-1}(\lambda) +
    m_{2j+1}(\lambda)) \equiv m_{2i-1}(\lambda) \pmod{2}.
  \end{displaymath}
  If $m_j(\mu') \ge 2$ then there is an $i$ such that
  $j = \mu'_{i+1} = \mu'_i$. We have $m_i(\mu) =
  \mu'_i - \mu'_{i+1} =
  0$ and thus $m_{2i-1}(\lambda) = 0$.  The above congruence shows that
  $j = \mu'_i$ is even.
  In other words $m_j(\mu') \le 1$ if
  $j$ is odd.
  
  For the converse, suppose that we have a partition
  $\mu$ so that $m_{2i-1}(\mu') \le 1$ for all $i$.
  Define the sequence $b_i$ as follows: for all $i\ge 1$ set $b_{2i-1}
  \in \{0,1\}$, such that $b_{2i-1} \equiv \mu'_i \pmod{2}$.  For
  all $i\ge 1$ set $b_{2i} = (m_i(\mu) - b_{2i-1} - b_{2i+1})/2$.
  First, we show that $b_{2i} \in \ZZ$. We have $m_i(\mu) = \mu'_i
  - \mu'_{i+1}$.  Thus $m_i(\mu) - b_{2i-1} - b_{2i+1} \equiv
  (\mu'_{i+1} + b_{2i+1}) + (\mu'_i + b_{2i-1}) \equiv 0 \pmod{2}$.
  Second, we show that $b_{2i} \ge 0$.  This is trivially true if
  $m_i(\mu) \ge 2$, since $b_{2i-1},b_{2i+1} \in \{0,1\}$.
  If $m_i(\mu) = 1$, since $m_i(\mu) \equiv b_{2i-1} + b_{2i+1}
  \pmod{2}$, not both of the $b$ can be 1.   If
  $m_i(\mu) = 0$, then $\mu'_{i+1} = \mu'_i$, which, by the condition
  on $\mu$ implies that $\mu'_i$ and $\mu'_{i+1}$  are even.  By the
  construction above this implies that $b_{2i-1} = b_{2i+1} = 0$, and
  thus $b_{2i} = 0$.  We then construct a
  partition $\lambda$ such that $m_i(\lambda) = b_i$.  This exists 
  since the only necessary condition on $b_i$ for the existence of
  such a partition is that $b_i \ge 0$ and $b_j = 0$ for $j$
  sufficiently large.
\end{proof}

The key result in allowing an efficient calculation of our sequences
is that the conjugacy classes involved are exactly those with a
restriction on the possible partitions, but any irreducible polynomial
is allowed.

\begin{definition}
  For each $\phi \in \cI(q)$ let $\cS_{\phi} \subseteq \cP$ be a subset
  of partitions containing the empty partition.
  
  We call a sequence $\cC_n \subseteq \Mat_n(\GF[q])$ of a union of
  conjugacy classes {\em partition restricted} with respect to the
  family $\{\cS_{\phi}\}$ if the functions
  $\Lambda : \cI(q) \rightarrow
  \cP$ which describe the elements of $\cC_n$ are exactly the
  functions 
  such that $\dim \Lambda = n$ and, for all $\phi \in \cI(q)$, we have
  $\Lambda(\phi) \in \cS_{\phi}$.  If all $\cS_{\phi}$ are the same (in
  which case we drop the subscript) we call the sequence $\cC_n$ 
  \emph{partition uniform} with respect to~$\cS$.
\end{definition}

Using these results yields 
\begin{theorem}
  Let $\cC_n \subseteq \Mat_n(\GF[2])$ denote the set of squares of
  elements of $\Mat_n(\GF[2])$.  Then $\cC_n$ is a union of conjugacy
  classes and it is partition uniform with respect to $\cS = \{
  \lambda \in \cP : m_{2i-1}(\lambda') \le 1, i \ge 1\} \cup \{\emptyset\}$.  
\end{theorem}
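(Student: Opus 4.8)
The plan is to reduce the statement to the two preparatory results \corref{image} and \propref{inverse}, which already do the real work.

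First I would observe that $\cC_n$ is a union of conjugacy classes, so that it even makes sense to speak of its defining data. Being a square is a class function: if $A = B^2$ and $A' = U^{-1} A U$, then $A' = (U^{-1} B U)^2$, so every conjugate of a square is a square. Hence membership in $\cC_n$ depends only on the conjugacy class, equivalently only on the data $\Lambda : \cI(2) \rightarrow \cP$ of \propref{conjugacy}. By the definition of partition uniformity it then suffices to prove that the class with data $\Lambda$ (of dimension $n$) meets $\cC_n$ if and only if $\Lambda(\phi) \in \cS$ for every $\phi \in \cI(2)$.

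Next I would identify the image of squaring at the level of class data. The class with data $\Lambda$ lies in $\cC_n$ exactly when it is the square of some matrix, i.e. when there is data $\Lambda_0$ whose square has data $\Lambda$. Over $\GF[2]$ we have $\phi^{(2)} = \phi$, so \corref{image} says that the square of the class $\Lambda_0$ has data $\cM$ given by $\cM(\phi) = \Delta(\Lambda_0(\phi))$, where $\Delta$ is the operator of \propref{inverse}. Since this map treats each irreducible $\phi$ separately, the condition $\cM = \Lambda$ decouples into $\Lambda(\phi) = \Delta(\Lambda_0(\phi))$ for each $\phi$; thus the class $\Lambda$ is a square if and only if $\Lambda(\phi)$ lies in the image of $\Delta$ for every $\phi$. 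Now \propref{inverse} characterizes that image as $\{\mu \in \cP : m_{2i-1}(\mu') \le 1,\ i \ge 1\}$, which is precisely $\cS$; the empty partition already satisfies this condition and is listed separately in $\cS$ only to honour the standing requirement that each $\cS_\phi$ contain $\emptyset$. This gives both implications of the claimed characterization.

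The one genuine thing to verify --- and what I expect to be the main obstacle --- is that the per-polynomial preimages furnished by the converse half of \propref{inverse} actually assemble into honest data of the correct total dimension, so that the resulting square root is an $n \times n$ matrix and not one of some other size. This is where I would use that $\Delta$ preserves weight: a short computation gives $|\Delta(\lambda)| = \sum_k k\, m_k(\lambda) = |\lambda|$, so any $\Delta$-preimage $\Lambda_0(\phi)$ of $\Lambda(\phi)$ has $|\Lambda_0(\phi)| = |\Lambda(\phi)|$. Weighting by $\deg(\phi)$ and summing over $\phi$ then gives $\dim \Lambda_0 = \dim \Lambda = n$, so $\Lambda_0$ specifies a genuine element of $\Mat_n(\GF[2])$ whose square realizes the class $\Lambda$. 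With this bookkeeping in place, the equivalence of the previous paragraph shows that $\cC_n$ is partition uniform with respect to $\cS$.
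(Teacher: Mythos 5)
Your proposal is correct and takes essentially the same route as the paper: reduce to the standard representative of a class, apply \corref{image} (noting $\phi^{(2)}=\phi$ over $\GF[2]$) to see that squaring acts polynomial-by-polynomial as the operator $\Delta$, and invoke \propref{inverse} to identify the image with $\cS$. The two points you spell out explicitly --- that being a square is a class function, and that $\Delta$ preserves weight so the preimage data $\Lambda_0$ has dimension $n$ --- are bookkeeping details the paper's terse proof leaves implicit, not a different argument.
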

\begin{proof}
  If a conjugacy class is specified by $\lambda: \cI(2) \rightarrow
  \cP$ its standard representative is
  \begin{displaymath}
    \bigoplus_{\phi \in \cI(2)} \bigoplus_i M\left(\phi^{\lambda(\phi)_i}\right).
  \end{displaymath}
  and thus its square is conjugate to
  \begin{displaymath}
    \bigoplus_{\phi \in \cI(2)} \bigoplus_i M\left(\phi^{\lambda(\phi)_i}\right)^2.
  \end{displaymath}
  Thus it suffices to consider
  $M(\phi(X)^r)^2$, where $\phi$ is irreducible.  The proof is
  finished using \corref{image} and \propref{inverse}.
\end{proof}

Armed with the above characterization of conjugacy classes of squares,
one could proceed by enumerating all such classes, and then summing
the sizes of the conjugacy classes, to get the desired counts.  This,
indeed, would adhere to Wilf's dictum\footnote{More precisely, if $\cA_n$ denotes a class of combinatorial
  object of ``size'' $n$, a good answer would be an algorithm to
  calculate $|\cA_n|$ with running time $o(|\cA_n|)$.}
 \cite{MR0653502} of a ``good answer'', since the
number of such classes appears to be of the order of $2^n$.  In fact,
as we shall see in \thmref{exact}, they are precisely of this order.   The first
60 terms of the
sequence of the number of such classes appears in \tabref{conj}.
However we can do much better, as we shall see in the next section.

\begin{table}[h!]%[htbp]
\caption{Conjugacy classes of squares}
  \label{tab:conj}
  \centering
  \tiny
  \begin{tabular}[htbp]{|l||r|r|}
    \hline 
    $n$ & \multicolumn{1}{c|}{$\GL_n(\GF[2])$} & \multicolumn{1}{|c|}{$\Mat_n(\GF[2])$} \\
    \hline
    $1$ & $1$ & $2$ \\
    $2$ & $2$ & $4$ \\
    $3$ & $5$ & $10$ \\
    $4$ & $10$ & $22$ \\
    $5$ & $20$ & $46$ \\
    $6$ & $41$ & $96$ \\
    $7$ & $82$ & $198$ \\
    $8$ & $166$ & $406$ \\
    $9$ & $334$ & $826$ \\
    $10$ & $667$ & $1668$ \\
    $11$ & $1336$ & $3362$ \\
    $12$ & $2682$ & $6770$ \\
    $13$ & $5360$ & $13590$ \\
    $14$ & $10724$ & $27248$ \\
    $15$ & $21467$ & $54614$ \\
    $16$ & $42936$ & $109378$ \\
    $17$ & $85876$ & $218946$ \\
    $18$ & $171786$ & $438180$ \\
    $19$ & $343574$ & $876738$ \\
    $20$ & $687184$ & $1753998$ \\
    $21$ & $1374427$ & $3508726$ \\
    $22$ & $2748852$ & $7018368$ \\
    $23$ & $5497766$ & $14038006$ \\
    $24$ & $10995706$ & $28077846$ \\
    $25$ & $21991402$ & $56157954$ \\
    $26$ & $43982908$ & $112318900$ \\
    $27$ & $87966150$ & $224642090$ \\
    $28$ & $175932383$ & $449289666$ \\
    $29$ & $351864964$ & $898586438$ \\
    $30$ & $703730584$ & $1797182704$ \\
    $31$ & $1407461288$ & $3594378014$ \\
    $32$ & $2814923196$ & $7188772666$ \\
    $33$ & $5629847656$ & $14377567834$ \\
    $34$ & $11259695532$ & $28755164100$ \\
    $35$ & $22519392276$ & $57510365698$ \\
    $36$ & $45038787489$ & $115020782350$ \\
    $37$ & $90077575358$ & $230041628622$ \\
    $38$ & $180155153036$ & $460083340304$ \\
    $39$ & $360310311906$ & $920166792942$ \\
    $40$ & $720620625522$ & $1840333728182$ \\
    $41$ & $1441241255486$ & $3680667639522$ \\
    $42$ & $2882482522524$ & $7361335523444$ \\
    $43$ & $5764965048250$ & $14722671356642$ \\
    $44$ & $11529930107318$ & $29445343113738$ \\
    $45$ & $23059860237589$ & $58890686756910$ \\
    $46$ & $46119720481194$ & $117781374180336$ \\
    $47$ & $92239440983766$ & $235562749221166$ \\
    $48$ & $184478882017076$ & $471125499580570$ \\
    $49$ & $368957764045976$ & $942251000588770$ \\
    $50$ & $737915528134398$ & $1884502003008980$ \\
    $51$ & $1475831056367066$ & $3769004008432714$ \\
    $52$ & $2951662112765356$ & $7538008019902670$ \\
    $53$ & $5903324225614736$ & $15076016043685054$ \\
    $54$ & $11806648451425570$ & $30152032092453552$ \\
    $55$ & $23613296902912949$ & $60304064191298614$ \\
    $56$ & $47226593806008646$ & $120608128390767918$ \\
    $57$ & $94453187612408280$ & $241216256792193274$ \\
    $58$ & $188906375224938380$ & $482432513597744820$ \\
    $59$ & $377812750450241204$ & $964865027212545410$ \\
    $60$ & $755625500901295794$ & $1929730054447325946$ \\
    \hline 
  \end{tabular}
\end{table}

\section{Generating Functions}
\label{sec:generating}
Let $\cC_n \subseteq \Mat_n(\GF[2])$ be a union of conjugacy classes
for each $n \ge 1$.
We associate two generating functions with $\cC$.  The first has
coefficients which give the numbers of elements in the conjugacy class
$\cC_n$ (scaled by the total number of invertible elements):
$$F_{\cC}(x) := 1 + \sum_{n=1}^{\infty} \frac{|\cC_n|}{|\GL_n(\GF[2])|} x^n.$$
The second has coefficients which give the number of conjugacy classes
in $\cC_n$:
$$G_{\cC}(x) := 1 + \sum_{n=1}^{\infty} |\#\{ \Lambda \in \cC_n \}|x^n,$$
where, by abuse of notation, we say that $\Lambda \in \cC_n$ if
$\Lambda\colon \cI(q) \rightarrow \cP$ specifies a conjugacy class in $\cC_n$.

We use the coefficient $|\cC_n|/|\GL_n(\GF[2])|$ because it is
\begin{equation}
\label{eq:decomp}
\sum_{\Lambda} \frac{1}{C(\Lambda,q)}
\end{equation}
where the outer sum is taken over all
$\Lambda\colon \cI(q) \rightarrow \cP$, specifying the conjugacy classes in
$\cC_n$, and because 
$C(\Lambda,q)$ has a multiplicative decomposition as in \propref{conjugacy}.

The reason for the definitions of partition restricted and partition
uniform is the following:
\begin{proposition}
  Let $\cC_n \subseteq \Mat_n(\GF[q])$ be a union of conjugacy classes,
  and $F_{\cC}(X)$ the associated generating function
  (resp., $G_{\cC}(X)$ is the associated generating function for the
  number of conjugacy classes).  If $\cC_n$ is
  partition restricted with respect to $\cS_{\phi}$ then
  \begin{equation}
    \label{eq:product}
    F_{\cC}(X) = \prod_{\phi \in \cI(q)} \sum_{\lambda \in \cS_{\phi}}
    \frac{1}{C(\lambda,q^{\deg(\phi)})} X^{|\lambda| \deg(\phi)},
  \end{equation}
  and
  \begin{equation}
    \label{eq:product:conj}
    G_{\cC}(X) = \prod_{\phi \in \cI(q)} \sum_{\lambda \in \cS_{\phi}}
     X^{|\lambda| \deg(\phi)}.
  \end{equation}
  If $\cC_n$ is partition uniform with respect to $\cS$ then 
  \begin{equation}
    \label{eq:uniform}
    F_{\cC}(X) = \prod_{d=1}^{\infty} \left(\sum_{\lambda \in \cS}
      \frac{1}{C(\lambda,q^d)} X^{|\lambda|d}\right)^{|\cI(q)_d|},
  \end{equation}
  and
  \begin{equation}
    \label{eq:uniform:conj}
    G_{\cC}(X) = \prod_{d=1}^{\infty} \left(\sum_{\lambda \in \cS}
    X^{|\lambda|d}\right)^{|\cI(q)_d|}.
  \end{equation}
\end{proposition}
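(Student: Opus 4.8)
The plan is to prove both partition-restricted identities \eqref{eq:product} and \eqref{eq:product:conj} by a single Euler-product mechanism, and then to deduce the partition-uniform identities \eqref{eq:uniform} and \eqref{eq:uniform:conj} by grouping the factors of equal degree. The point is that a conjugacy class in $\cC_n$ is parametrized by a finitely supported function $\Lambda\colon \cI(q) \to \cP$ with $\Lambda(\phi) \in \cS_\phi$ for every $\phi$, and that both quantities attached to $\Lambda$ --- the dimension $\dim\Lambda = \sum_\phi |\Lambda(\phi)|\deg(\phi)$ and the centralizer order $C(\Lambda,q)$ --- factor as products indexed by $\phi \in \cI(q)$. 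This multiplicativity is exactly what converts a sum over $\Lambda$ into a product over $\phi$.

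I would begin with the class-counting series $G_\cC$, which is the cleaner of the two. First I would absorb the constant term $1$ into the sum by allowing the everywhere-empty $\Lambda$ (which has $\dim\Lambda = 0$), so that $G_\cC(X) = \sum_\Lambda X^{\dim\Lambda}$, the sum ranging over all finitely supported $\Lambda$ with $\Lambda(\phi) \in \cS_\phi$ for all $\phi$. Writing $X^{\dim\Lambda} = \prod_{\phi} X^{|\Lambda(\phi)|\deg(\phi)}$ and using that the values $\Lambda(\phi)$ range independently over $\cS_\phi$, I would recognize the sum as the expansion of $\prod_\phi \big(\sum_{\lambda \in \cS_\phi} X^{|\lambda|\deg(\phi)}\big)$, which is \eqref{eq:product:conj}. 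Here the hypothesis $\emptyset \in \cS_\phi$ is what supplies the constant term $1$ in each factor, so that choosing $\lambda = \emptyset$ for all but finitely many $\phi$ recovers precisely the finitely supported $\Lambda$.

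For $F_\cC$ the argument is identical once the weight is inserted. By equation~\eqref{eq:decomp} we have $F_\cC(X) = \sum_\Lambda C(\Lambda,q)^{-1} X^{\dim\Lambda}$, and \propref{centralizer} gives the factorization $C(\Lambda,q) = \prod_\phi C(\Lambda(\phi), q^{\deg(\phi)})$, with $C(\emptyset, q^{\deg(\phi)}) = 1$ for the empty partition. Distributing over $\phi$ as before now yields \eqref{eq:product}. Finally, in the partition-uniform case every $\cS_\phi$ equals the same set $\cS$, so the $\phi$-factor depends on $\phi$ only through $d = \deg(\phi)$; collecting the $|\cI(q)_d|$ irreducibles of each degree $d$ turns the product over $\phi$ into $\prod_d (\cdots)^{|\cI(q)_d|}$, giving \eqref{eq:uniform} and \eqref{eq:uniform:conj}.

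The step that actually needs care --- and the one I would treat as the main obstacle --- is justifying this rearrangement as an identity of formal power series, since the product over $\phi \in \cI(q)$ is infinite. I would verify that each factor has constant term $1$ (again from the $\emptyset$ term) and that its lowest nonconstant monomial is $X^{\deg(\phi)}$, because any nonempty $\lambda$ has $|\lambda| \ge 1$. Consequently only the finitely many $\phi$ with $\deg(\phi) \le n$ can influence the coefficient of $X^n$, the infinite product is a well-defined element of $\ZZ[[X]]$ (resp.\ $\mathbb{Q}[[X]]$ for $F_\cC$), and the formal interchange of the sum over $\Lambda$ with the product over $\phi$ is legitimate. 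This finiteness is precisely the image, on the power-series side, of the finite-support condition on $\Lambda$.
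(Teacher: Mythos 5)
Your proposal is correct and follows essentially the same route as the paper: express the series as a sum over finitely supported $\Lambda$, use the multiplicative decomposition of $C(\Lambda,q)$ from Proposition~\ref{prop:centralizer} together with the additivity of $\dim\Lambda$, and interchange the sum over $\Lambda$ with the product over $\phi\in\cI(q)$, then collect factors by degree in the partition-uniform case. The only difference is one of exposition: you spell out the constant-term normalization and the formal justification that the infinite product is well defined (only $\phi$ with $\deg(\phi)\le n$ affect the coefficient of $X^n$), points the paper's proof leaves implicit.
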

\begin{proof}
  We use the multiplicative decomposition from \propref{centralizer}
  to see that
  \begin{displaymath}
    F_{\cC}(X) = \sum_{\Lambda} \prod_{\phi \in \cI(q)}
    \frac{1}{C(\Lambda(\phi),q^{\deg(\phi)})} X^{|\Lambda(\phi)| \deg(\phi)},
  \end{displaymath}
  where the sum is over all possible $\Lambda$ describing the
  conjugacy classes in $\cC_n$.  By the definition of partition restricted, we
  may interchange the summation and product obtaining Equation~\eqref{eq:product}.
  By definition of partition uniform we then have
  Equation~\eqref{eq:uniform}.  A similar argument applies to $G_{\cC}$.
\end{proof}

If we are interested only in classes in $\GL_n(\GF[2])$ instead of
$\Mat_n(\GF[2])$ we modify Equations~\eqref{eq:uniform} and~\eqref{eq:uniform:conj} by using the
exponent $q-1$ instead of $|\cI(q)_1|= q$, which corresponds to omitting
the irreducible polynomial $\phi(X) = X$.

We now show that the number of conjugacy classes of squares (both for
all matrices and for invertible matrices) grows exactly as $2^n$.
\begin{lemma}
  \label{lem:product}
  Let $q$ be a prime power.  As formal power series we have
  \begin{equation}
    \label{eq:product:formula}
    1-qX = \prod_{n=1}^{\infty} (1-X^n)^{|\cI(q)_n|}.
  \end{equation}
\end{lemma}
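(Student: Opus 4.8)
The plan is to recognize \eqref{eq:product:formula} as the Euler-product (zeta-function) identity for the polynomial ring $\GF[q][X]$, read off in two different ways as a formal power series in $X$. Everything takes place in $\ZZ[[X]]$, so I would first note that the right-hand product is a well-defined formal power series: for each fixed degree $N$ only finitely many $\phi \in \cI(q)$ have $\deg(\phi) \le N$, so each coefficient is a finite sum and the product converges in the $X$-adic topology.

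First I would count monic polynomials directly. A monic polynomial of degree $n$ over $\GF[q]$ is determined by its $n$ lower coefficients, each chosen freely from $\GF[q]$, so there are exactly $q^n$ of them. Hence the ordinary generating function for monic polynomials by degree is
\begin{displaymath}
  \sum_{n=0}^{\infty} q^n X^n = \frac{1}{1-qX}.
\end{displaymath}

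Next I would compute the same generating function a second way, using unique factorization of monic polynomials into monic irreducibles. Grouping the factors by their degree gives the Euler product
\begin{displaymath}
  \sum_{n=0}^{\infty} q^n X^n
  = \prod_{\phi \in \cI(q)} \left(1 + X^{\deg(\phi)} + X^{2\deg(\phi)} + \cdots\right)
  = \prod_{n=1}^{\infty} \frac{1}{(1-X^n)^{|\cI(q)_n|}},
\end{displaymath}
where expanding each geometric factor and collecting terms corresponds exactly to choosing, for each irreducible $\phi$, a multiplicity; by unique factorization each monic polynomial of degree $m$ is obtained precisely once, accounting for the coefficient $q^m$. Equating the two expressions for $\sum_n q^n X^n$ and taking reciprocals yields \eqref{eq:product:formula}.

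The step requiring the most care is the passage from the product over individual irreducibles to the product grouped by degree, together with the claim that expanding the Euler product reproduces $\sum_n q^n X^n$: this is precisely where unique factorization in $\GF[q][X]$ is invoked, and where one must check that the formal rearrangement is legitimate. Because each coefficient of $X^m$ on both sides is a finite sum indexed by the (finitely many) monic polynomials of degree $m$, the rearrangement is valid in $\ZZ[[X]]$ and no analytic convergence issue arises; the identity is therefore an identity of formal power series, exactly as stated. I do not expect the algebra beyond this point to present any real obstacle.
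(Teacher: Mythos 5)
Your proof is correct, but it takes a genuinely different route from the paper. The paper's proof never mentions unique factorization of polynomials: it observes that both sides of \eqref{eq:product:formula} have constant term $1$, so it suffices to equate their logarithmic derivatives, and then computes the logarithmic derivative of the right-hand side as $-\sum_m X^{m-1}\sum_{d\mid m} d\,|\cI(q)_d|$, finishing with the counting identity $\sum_{d\mid m} d\,|\cI(q)_d| = q^m$ (every element of $\GF[q^m]$ is a root of exactly one monic irreducible of degree $d \mid m$, each such polynomial having $d$ roots). You instead prove the Euler-product identity for the zeta function of $\GF[q][X]$: count monic polynomials of degree $n$ directly to get $\sum_n q^n X^n = 1/(1-qX)$, then re-derive the same series from unique factorization into monic irreducibles, and take reciprocals. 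The two arguments are essentially dual: the paper's divisor-sum identity is exactly what one gets by taking the logarithmic derivative of your Euler product, so the underlying arithmetic content (classifying elements of $\GF[q^m]$, respectively monic polynomials, by irreducibles) is the same fact seen from two sides. Your version is the more classical and conceptual one, and it avoids differentiation entirely; the paper's version avoids any discussion of rearranging infinite products, since after taking logarithmic derivatives everything is a sum whose coefficientwise convergence is immediate. Your handling of the formal-power-series convergence (finitely many irreducibles in each degree, hence $X$-adic convergence and legitimate rearrangement in $\ZZ[[X]]$) is the right thing to say and closes the only point where care is needed.
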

\begin{proof}
  Since both the left- and right-hand sides of
  Equation~\eqref{eq:product:formula} have constant term 1, it
  suffices to show that the logarithmic derivatives of both sides are
  equal.  The logarithmic derivative of the right-hand side is
  \begin{displaymath}
    \begin{aligned}
     -\sum_{n=1}^{\infty} \frac{n|\cI(q)_n|X^{n-1}}{1-X^n} 
      = & -\sum_{n=1}^{\infty} \sum_{j=0}^{\infty}
      n|\cI(q)_n|X^{(j+1)n-1} \\
      = & -\sum_{m=1}^{\infty} X^{m-1} \sum_{d|m} d | \cI(q)_d| \\ = &
      -\frac{1}{X} \sum_{m=1}^{\infty} (qX)^m = -\frac{q}{1-qX},
    \end{aligned}
  \end{displaymath}
  which is the logarithmic derivative of the left-hand side.  In the
  above we have used the fact that $\sum_{d|m}d | \cI(q)_d| = q^m$.
  This holds because every element of $\GF[q^m]$ is the root of some
  irreducible polynomial over $\GF[q]$ of degree $d\mid m$ (and
  conversely), and each of those polynomials has exactly $d$ roots.
\end{proof}
\begin{theorem}
  \label{thm:exact}
  Let $a'(n)$ denote the number of conjugacy classes of
  squares for $\Mat_n(\GF[2])$ and $b'(n)$ the number of conjugacy
  classes of squares
  for $\GL_n(\GF[2])$.  We have
  \begin{align*}
    1 + \sum_{n=1}^{\infty} a'(n) z^n & = \prod_{n=1}^{\infty}\frac{1-2z^{2n}}{(1-2z^n)(1-2z^{4n})}\\
\intertext{and}
    1 + \sum_{n=1}^{\infty} b'(n) z^n & = \prod_{n=1}^{\infty}\frac{(1-z^{2n})(1-2z^{2n})}{(1+z^{2n-1})(1-2z^n)(1-2z^{4n})}.
  \end{align*}
  From this it follows that
  there are real $\alpha', \beta' > 0$ such
  that $a'(n) \sim \alpha' 2^n$ and $b'(n) \sim \beta' 2^n$.  
\end{theorem}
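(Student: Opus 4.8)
The plan is to substitute the combinatorial description of $\cS$ into the partition-uniform count \eqref{eq:uniform:conj} and collapse the resulting double product with \lemref{product}. The first step is to find the weight generating function $\sum_{\lambda \in \cS} w^{|\lambda|}$. Because conjugation $\lambda \mapsto \lambda'$ is a weight-preserving involution of $\cP$, the defining condition $m_{2i-1}(\lambda') \le 1$ puts $\cS$ in weight-preserving bijection with the partitions in which every odd part occurs with multiplicity at most one. Building such a partition by taking each odd part $0$ or $1$ times and each even part freely gives $\sum_{\lambda\in\cS} w^{|\lambda|} = \prod_{k\ge 1}(1+w^{2k-1})/(1-w^{2k}) = \vartheta(w)$.

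Next I would rewrite $\vartheta$ using only factors $1-w^m$. From $1+w^{2k-1} = (1-w^{4k-2})/(1-w^{2k-1})$ and $\prod_{k\ge 1}(1-w^{2k-1})(1-w^{2k}) = \prod_{n\ge 1}(1-w^n)$ one obtains $\vartheta(w) = \prod_{m\ge 1}(1-w^m)^{f_m}$, where $f_m = 0$ when $m\equiv 2\pmod 4$ and $f_m = -1$ otherwise. Since the squares form a partition-uniform family with respect to $\cS$, \eqref{eq:uniform:conj} reads $1+\sum_n a'(n)z^n = \prod_{d\ge 1}\vartheta(z^d)^{|\cI(2)_d|}$. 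Substituting the product form of $\vartheta$, interchanging the two products (legitimate as formal power series, since each coefficient of $z^N$ involves only finitely many factors), and recognizing the inner product as \eqref{eq:product:formula} with $X=z^m$ and $q=2$ yields
\[ 1+\sum_{n=1}^\infty a'(n)z^n = \prod_{m\ge 1}\left(\prod_{d\ge 1}(1-z^{dm})^{|\cI(2)_d|}\right)^{f_m} = \prod_{m\ge 1}(1-2z^m)^{f_m}. \]
A termwise comparison of exponents then shows $\prod_m(1-2z^m)^{f_m}$ equals the stated $\prod_n (1-2z^{2n})/[(1-2z^n)(1-2z^{4n})]$, since both assign the exponent $-1$ to $(1-2z^m)$ exactly when $m\not\equiv 2\pmod 4$. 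For $b'(n)$ I would invoke the remark following the partition-uniform proposition: passing from $\Mat_n$ to $\GL_n$ replaces the degree-one exponent $|\cI(2)_1|=2$ by $q-1=1$, i.e. multiplies the generating function by $\vartheta(z)^{-1} = \prod_n (1-z^{2n})/(1+z^{2n-1})$, and this factor times the formula just found is precisely the claimed $\GL$ product.

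For the asymptotics I would use the collapsed form $\prod_{m\not\equiv 2\,(4)}(1-2z^m)^{-1}$ and isolate the $m=1$ factor, writing $1+\sum_n a'(n)z^n = (1-2z)^{-1}H(z)$ with $H(z) = \prod_{m\ge 3,\ m\not\equiv 2\,(4)}(1-2z^m)^{-1}$. Every remaining factor has its poles on the circle $|z|=2^{-1/m}\ge 2^{-1/3}>1/2$, so $H$ is analytic on $|z|<2^{-1/3}$ and $H(1/2)=\prod_{m\ge 3,\ m\not\equiv 2\,(4)}(1-2^{1-m})^{-1}$ is a convergent product of positive numbers, hence $H(1/2)>0$. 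The smallest-modulus singularity is therefore the simple pole at $z=1/2$; extracting its contribution gives $a'(n)=H(1/2)\,2^n+O\big(2^{n/3}\big)$, so $a'(n)\sim\alpha' 2^n$ with $\alpha'=H(1/2)>0$. The identical argument applied to $b'(n)$, after multiplying $H$ by $\vartheta(z)^{-1}$ (analytic and nonzero at $z=1/2$), gives $b'(n)\sim\beta' 2^n$ with $\beta'=\vartheta(1/2)^{-1}H(1/2)>0$.

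The main obstacle I anticipate is bookkeeping rather than anything conceptual: correctly tracking which residue classes modulo $4$ carry exponent $-1$ versus $0$ through the rewriting of $\vartheta$ and the product interchange, and then confirming that the two stated closed forms really agree term by term. The only genuinely analytic input---that $z=1/2$ strictly dominates the other singularities and that $H(1/2)\ne 0$---is immediate once the product form is in hand.
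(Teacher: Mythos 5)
Your proposal is correct and takes essentially the same route as the paper: both express the class-counting series as $\prod_{d\ge 1}\vartheta(z^d)^{|\cI(2)_d|}$ via partition uniformity, rewrite $\vartheta$ as a product of factors $(1-w^m)^{\pm 1}$, collapse the double product with \lemref{product} to obtain $\prod_{n\ge 1}(1-2z^{2n})/[(1-2z^n)(1-2z^{4n})]$, divide by $\vartheta(z)$ for the invertible case, and read off the asymptotics from the dominant simple pole at $z=1/2$. Your two refinements---the explicit conjugation bijection proving $\sum_{\lambda\in\cS}w^{|\lambda|}=\vartheta(w)$, and the isolation of the $(1-2z)^{-1}$ factor to justify analyticity of the remaining product past $|z|=1/2$---are just more detailed versions of steps the paper treats tersely, not a different method.
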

\begin{proof}
  When $\cC$ is the set of conjugacy classes of squares of all
  matrices then
  $G_{\cC}(z) = \prod_{d=1}^{\infty} \vartheta(z^d)^{\cI(q)_d}$.  When
  we are dealing with invertible matrices the only factor that differs
  is the one for $d=1$.  Since $|\cI(q)'_1| = |\cI(q)_1|-1$, we must
  divide the above by $\vartheta(z)$.  However we have
  \begin{displaymath}
    \begin{aligned}
      \vartheta(z) & = \prod_{n=1}^{\infty} \frac{1+z^{2n-1}}{1-z^{2n}}
      \\ & = \prod_{n=1}^{\infty}\frac{(1-z^{2n})}{(1-z^n)(1-z^{4n})}.
    \end{aligned}
  \end{displaymath}
  We now apply \lemref{product} to each of the factors and find that
  \begin{displaymath}
    G_{\cC}(z) = \prod_{n=1}^{\infty}\frac{1-2z^{2n}}{(1-2z^n)(1-2z^{4n})}.
  \end{displaymath}
  This product clearly converges when $|z| < 1/2$, has a simple
  pole at $z=1/2$, and has no other singularities when $|z| = 1/2$.  We have
  \begin{displaymath}
    \alpha' := \lim_{z \rightarrow 1/2} (1-2z)G_{\cC}(z) = \prod_{n=1}^{\infty}\frac{1-2(1/2)^{2n}}{(1-(1/2)^n)(1-2(1/2)^{4n})}.
  \end{displaymath}
  Thus, $a'(n) \sim \alpha' 2^n$.
  For the case that $\cC$ specifies invertible squares we must divide
  the above by $\vartheta(1/2)$.
\end{proof}

We may calculate $F_{\cC}(X)$ by using any of the standard fast algorithms
for manipulating power series \cite{MR0520733} but we may exploit its
special form for a more efficient calculation as follows.

A large part of the calculation involves calculating a product
\begin{displaymath}
  F(X) = \prod_{d=1}^n f_d(X)^{n_d}
\end{displaymath}
for power series $f_d(X)$ whose constant term is 1, and positive
integer exponents $n_d$.  We may speed up this calculation
substantially as follows:  taking the logarithmic derivative, we have
\begin{displaymath}
  \frac{F'(X)}{F(X)} = \sum_{d=1}^n n_d \frac{f_d'(X)}{f_d(X)},
\end{displaymath}
of which we're interested in the first $n+1$ terms.  Treating the
coefficients of $F(X)$ after the constant term (which is 1) as
unknowns, we get a linear system by multiplying both sides by $F(X)$
and equating coefficients.  In fact, the linear system is lower
triangular, and so may be solved quickly.  For large $n$ we
may do this more quickly by using the algorithm described in \cite{MR2527719}.
Making this change sped up
the calculation for $n=14$ from 318 seconds to 1 second.  This speed-up improves substantially for larger $n$.

As an alternative to directly manipulating the coefficients as large rational
numbers, we may use the Chinese Remainder Theorem.  Choose distinct
odd primes $p_1, \dots, p_r$ so that $\prod_i p_i > 2^{n^2}$, and
$p_i$ does not divide $2^k -1$ for $k \le n$.  Note that by the prime
number theorem (or weaker estimates) we may do this with $p_i \approx
\log(2^{n^2}) = n^2 \log(2)$, and $r \approx n^2/ \log(n)$. We then have
$C(\lambda,q^d) \not \equiv 0 \pmod{p_i}$, so that we may calculate
the truncated power series of each of the above summands modulo
$p_i$, and then the truncated version of $F_{\cC}$ modulo each of the
$p_i$.  We then multiply the coefficients of $x^k$ by the
$|\GL_k(\GF[2])| \bmod {p_i}$, and finally use the Chinese Remainder
Theorem to recover $|S_k|$ since we know that it is a positive integer
$\le 2^{n^2}$.

Note that the proof of \propref{inverse} yields an algorithm to decide
whether or not a matrix
in $\Mat_n(\GF[2])$ is a square, and, if so, calculate a square root.
Namely, using algorithms for rational canonical form \cite{MR1484489} we
obtain a change-of-basis matrix and a standard representative.  We
use the construction in the proof of \propref{inverse} to find a
partition associated with the class of a square root.  Finally, we
use the change-of-basis matrix to transform the rational canonical
form of the square root.

\section{Results}
\label{sec:results}

We programmed the algorithm described above in the SAGE system for
symbolic calculation \cite{sage} and used it
to calculate the first 60
terms of the following sequences:

\begin{table}[h!] 
\caption{Calculation Times}
  \label{tab:calc}
 \begin{center}
  \begin{tabular}{|r|r|r|}
    \hline
    Classes & Partition Count & Time \\
    \hline \hline
    Squares in $\Mat_n(\GF[2])$ & 641800 & 429.69 sec \\
    Squares in $\GL_n(\GF[2])$ & 157671 & 99.30 sec \\
    Separable elements in $\Mat_n(\GF[2])$ & 1 & 1.11 sec \\
    Separable elements in $\GL_n(\GF[2])$ & 1 & 1.10 sec \\
    Semisimple elements in $\Mat_n(\GF[2])$ & 60 & 1.08 sec \\
    Semisimple elements in $\GL_n(\GF[2])$ & 60 & 1.35 sec \\
    \hline 
  \end{tabular}
  \end{center}
\end{table}
In \tabref{m} (pages~\pageref{begin20437s}--\pageref{end20437s}) we give the first 60 terms of the sequence $a(n)$, the number of $n \times n$ matrices with
coefficients in $\GF[2]$ which are squares of other such matrices.  
The related sequence $b(n)$ in which the matrices are invertible is
given in~\tabref{gl} (pages~\pageref{begin20498s}--\pageref{end20498s}).  We generated these tables in
about 200 seconds each on a workstation.

In order to show that there is an $\alpha > 0$ such that $a(n) \sim
\alpha 2^{n^2}$ (and similarly for $b(n)$ and $\beta$) it would suffice to show that $F_{\cC}(z)$ is holomorphic
in the disk $\{z \in \CC: |z| \le 1 + \epsilon\}$ apart from having a simple pole
at $z=1$ with residue $-\alpha/\gamma_2$, where $\gamma_2 :=
\prod_{n-1}^{\infty} (1-2^{-n}) \approx 0.28878809508660242$ (since $|\GL_n(\GF[2])| \sim \gamma_2
2^{n^2}$).
We conjecture that this is, indeed, the case.
Note that Wall \cite{MR1711918} has proved similar statements  
when $\cC$ specifies the classes of semisimple, regular, and
regular semisimple matrices over a finite field.

As a sanity check on the conjecture we have calculated the
coefficients of the first 71 coefficients $c_0, \dots, c_{70}$ of
$F_{\cC}(z)$, where $\cC$ is the classes of squares of invertible
matrices, within an
accuracy of $2^{-3600}$, and set
$\widehat{\beta} = c_{70}$ to be the coefficient of $z^{70}$.  We
plot below $|c_j - \widehat{\beta}|^{-1/j}$
for $j=1,\dots,69$.
We have $\widehat{\beta} \approx 0.5844546428649343516383$.

\begin{figure}[h!]
  \begin{center}
    \includegraphics[width=14cm]{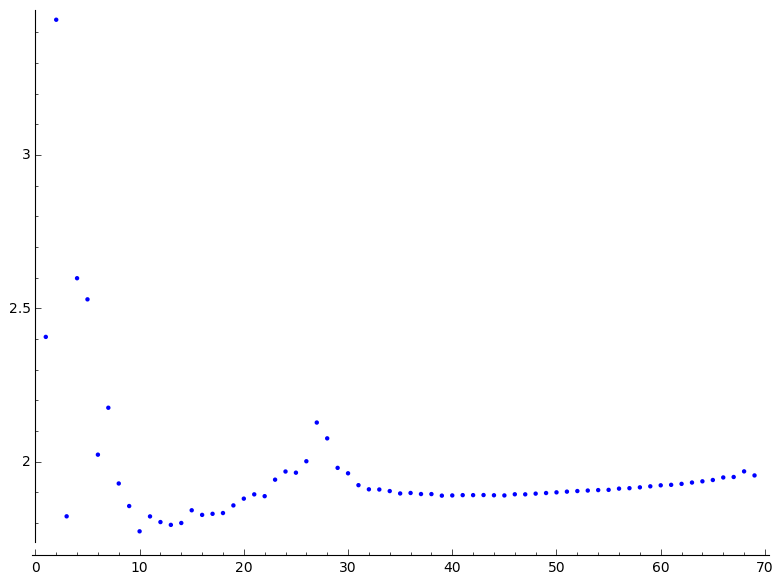}
    \end{center}
  \caption{$|c_j - \widehat{\beta}|^{-1/j}$}
  \label{fig:glratio}
\end{figure}
From~\figref{glratio}, it appears that, except for a pole at $z=1$,
$F_{\cC}(z)$ is holomorphic in an open disk of radius
$1.954579780196859562\dots$ centered at~0.
We give a similar plot in \figref{mratio}, where $\cC$ is the class of squares of all
matrices, and the coefficients of $F_{\cC}(z)$ are $d_0, d_1, \dots$.

\begin{figure}[h!]
  \begin{center}
    \includegraphics[width=14cm]{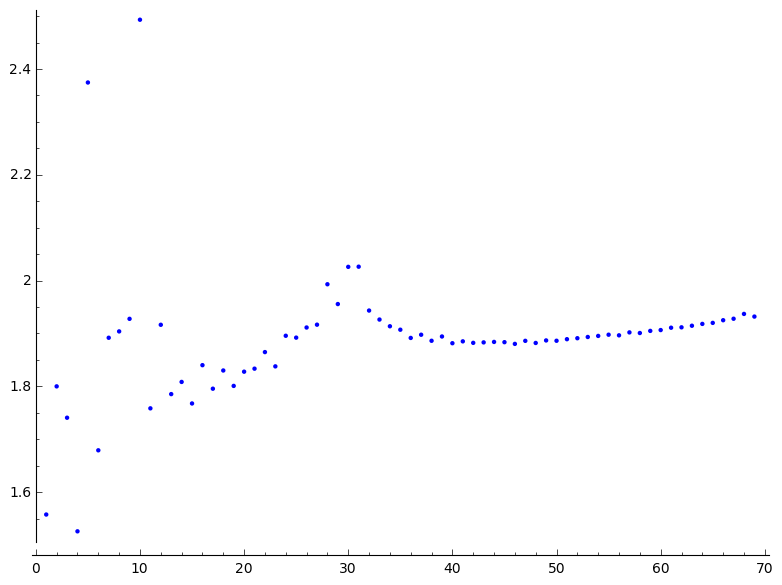}
    \end{center}
  \caption{$|d_j - \widehat{\alpha}|^{-1/j}$}
  \label{fig:mratio}
\end{figure}
From these figures, it appears that
$\widehat{\alpha} \approx
1.358036747413654505$, and that, apart from the pole at $z=1$
that $F_{\cC}(z)$ is holomorphic in an open disk of radius $1.931991705356004184580743154\dots$.
\section{Remarks and Open Problems}
\label{sec:remarks}
The decomposition given in Equation~\eqref{eq:product} is closely
related to the product formula for the cycle index as described in
\cite{MR1696313,MR604337,MR937520}.  Following \cite{MR937520}, we define a
generating function
\begin{displaymath}
  Z_d(q;x) := \frac{1}{[q]_d} \sum_{\alpha \in
    \Mat_d(\GF[q])} \prod_{\phi,\lambda}
  x_{\phi,\lambda}^{a_{\phi,\lambda}(\alpha)},
\end{displaymath}
where $\phi \in \cI(q), \lambda \in \cP$, $a_{\phi,\lambda} = 1$
if $(\phi,\lambda)$ occurs in the description of the conjugacy class
of $\alpha$ and $0$ if it does not, and $[q]_d = |\GL_d(\GF[q])|$.
Define $Z_0(q;x) = 1$.
We
then define a generating function
\begin{displaymath}
  \Phi(u;x) := 1 + \sum_{d \ge 1} Z_d(q;x) u^d.
\end{displaymath}
Kung~\cite{MR604337} and Stong~\cite{MR937520} prove
the factorization
\begin{displaymath}
  \Phi(u;x) = 
  \prod_{\phi \in \cI(q)} \left(1 +  \sum_{\lambda \in \cP}
    \frac{x_{\phi,\lambda}}{C(\lambda,\phi)} u^{|\lambda|d}\right).
\end{displaymath}
If our sets $\cC_n$ are partition restricted, and we set
$x_{\phi,\lambda}=1$ if $\lambda \in \cS_{\phi}$ and 0 otherwise, then
we recover Equation~\eqref{eq:product}.

Fulman
\cite{MR1696313} and Lehrer \cite{MR1329462} have given closed-form expression for the ratios
corresponding to $\alpha$ and $\beta$ in
the three cases treated by Wall---the semisimple, regular, and
regular semisimple matrices.  We leave it as an open problem to prove
the conjecture in \secref{results} and find a closed-form expression
for $\alpha$ and~$\beta$.

As a generalization of \propref{inverse} it would be interesting to
find a closed-form expression for the number of partitions $\lambda$
with the property $m_i(\mu) = 2m_{2i}(\lambda) + m_{2i-1}(\lambda) +
m_{2i+1}(\lambda)$ for all $i$ when $m_{2i+1}(\mu') \le 1$.  This amounts
to counting the number of integer points in the polytope given by the
above equalities and $m_i(\lambda) \ge 0$.

The methods used here should, in principle, be able to be used to
answer analogous questions, such as the number of $n\times n$ matrices
over $\GF[3]$ which are cubes.

\section{Acknowledgments}
\label{sec:ack}

First, and foremost, I'd like to thank Neil Sloane for posing the
problem, and for his magnificent creation of the OEIS.  It has proven
invaluable to me and to countless other mathematicians.  I'd also like
to thank Bob Guralnick for referring me to his work and others' on
related problems, and Jason Fulman for useful correspondence.
Last, I'd like to thank the late Herb Wilf whose wonderful works have been
an inspiration to me.
\appendix

\section{Algorithms}
\label{sec:alg}
\lstlistoflistings
\begin{ccode}[caption={C program for exhaustively calculating $a(n)$},label=lst:ccode]
#include <stdlib.h>
#define BITS_PER_BYTE 8
inline void SetBit(unsigned long int *t,const unsigned long int i) {
  const int bits = BITS_PER_BYTE*sizeof(unsigned long int);
  t[i/bits] |= 1L << (i%bits);
}
unsigned long int CountUp(unsigned long int *tab,const long int n) {
  unsigned long int count = 0UL;
  long int i;
  for(i=0; i < n; i++) {
    count += __builtin_popcountl(tab[i]);
  }
  return count;
}
unsigned long int SloaneExhaust(const int n) {
  const long int n2 = 1L<<(n*n);
  unsigned long int *table;
  const long int lchunk = sizeof(unsigned long int);
  const long int chunk = BITS_PER_BYTE*lchunk;
  const int t_size = (n2 + chunk - 1)/chunk;
  int t;
  unsigned long int A,A2;
  unsigned long int rmask,cmask;
  unsigned long int bigcount;
  table = (unsigned long int *)calloc(t_size,sizeof(unsigned long int));
  A = 0L; B = 0L;
  /* $B$ reprensents the current value of $A^2$ */
  SetBit(table,B);
  rows_mask = (1L<<n) - 1L;
  col_mask = 0L;
  for(t=0; t < n*n; t += n) {
    col_mask |= 1L<<t;
  }
  for(t=1; t < n2; t++) {
    /* get bit index to be flipped in Gray code */
    const long int k = __builtin_ctzl(t);
    const long int i = k/n;
    const long int j = k%n;
    const unsigned long int Eij = 1L<<k;
    /* Eij represents $E_{i,j}$ */
    /* $B \leftarrow B + A E + E A$ */
    B ^= (((A>>(n*j))&row_mask)<<(n*i)) ^ (((A>>i)&col_mask)<<j);
    if (i == j)
      B ^= Eij; /* $B \leftarrow B + E_{i,j}$ */
    A ^= Eij; /* $A \leftarrow A + E_{i,j}$ */
    SetBit(table,B);
  }
  bigcount = CountUp(table,t_size);
  free(table);
  return bigcount;
}
\end{ccode}
\clearpage
\begin{pycode}[caption={SAGE programs for improved algorithm},label=lst:mainsage]
def Count(n,all=True):
    return CountGeneral(n,myparts,all=all)

def CountGeneral(n,s,all=True):
    R.<x> = PowerSeriesRing(QQ,default_prec = n+1)
    pp = [R(1) for _ in range(n+1)]
    for w in range(1,n+1):
        for lam in s(w):
            for d in range(1,n//w+1):
                pp[d] += centralizer(lam,QQ(2)^d)^-1*x^(w*d)
    res = BigProduct(pp[1:],[Irr(2,d,all=all) for d in range(1,n+1)],n)
    return [GLorder(_+1,2)*res[_] for _ in range(n)]

# auxilliary routines
def multiplicity(p):
    # p is a partition
    l = p.conjugate().to_list()
    return [l[_-1] - l[_] for _ in range(1,len(l))] + [l[-1]]

def qPoch(a,q,n):
    return prod([1-a*q^k for k in range(n)])

def centralizer(lam,q):
    if lam.size() == 0:
        return 1
    return q^sum([x^2 for x in lam.conjugate()])*prod([qPoch(q^(-x),q,x) 
      for x in multiplicity(lam)])

def GLorder(n,q):
    return centralizer(Partition(n*[1]),q)

def Irr(q,d,all=True):
    if d == 1 and (not all):
        return q-1
    return sum([moebius(d//n)*(q^n) for n in divisors(d)])/d

def LDeriv(f):
    return f.derivative()/f

# routine for calculating $\prod_d f_d(X)^{n_d}$
def BigProduct(l,p,prec):
    # l are power series, and p are exponents
    n = len(l)
    xx = sum([p[_]*LDeriv(l[_]) for _ in range(n)])
    R = xx.parent().base_ring()
    res = vector(R,xx.padded_list()[:prec])
    mat = matrix(R,prec,prec)
    for k in range(prec):
        for r in range(1,k+1):
            mat[k,r-1] = -res[k-r]
        mat[k,k] = R(k+1)
    return mat.solve_right(res)

# routines for enumerating the restricted partitions
  def OPart(n,k):
    # Partitions of n whose odd parts have multiplicity <=1 and whose largest part is k
    if n < k:
        return
    elif k == 1:
        if n == 1:
            yield [1]
    elif k == 0:
        if n == 0:
            yield []
    elif k == 2: # optimization
        yield (n//2)*[k] + (n%2)*[1]
    else:
        for m in range(1,1 + (1 if (k%2 == 1) else n//k)):
            for j in range(k):
                for x in OPart(n-m*k,j):
                    yield m*[k] + x

def PPart(n):
    for j in range(1,n+1):
        for x in OPart(n,j):
            yield x

def myparts(n):
    for x in PPart(n):
        if len(x) > 0:
            yield Partition(x).conjugate()
\end{pycode}

\section{Table of Values}\label{sec:tableofvalues}
\label{begin20437s}
{   \centering
          \begin{longtable}{||l|L{5.250000in}||}
            %  \caption{Number of matrices which are squares, time 204.37s}
            \caption{Number of matrices which are squares} 
\label{tab:m}\\
\hline \hline
$n$ & $a(n)$ \\
 \hline 
  \endfirsthead
  \hline \hline
$n$ & $a(n)$ \\
 \hline 
  \endhead
  \hline \hline
  \multicolumn{2}{c}{Number of matrices that are squares} \\
  \endfoot
  \endlastfoot
\scriptsize
1&\seqsplit{%
2}
\\
2&\seqsplit{%
10}
\\
3&\seqsplit{%
260}
\\
4&\seqsplit{%
31096}
\\
5&\seqsplit{%
13711952}
\\
6&\seqsplit{%
28275659056}
\\
7&\seqsplit{%
224402782202048}
\\
8&\seqsplit{%
7293836994286696576}
\\
9&\seqsplit{%
952002419516769475035392}
\\
10&\seqsplit{%
497678654312172407869125822976}
\\
11&\seqsplit{%
1044660329769242614113093804053562368}
\\
12&\seqsplit{%
8745525723307044762290950664928498588583936}
\\
13&\seqsplit{%
293618744028817341095271816309320065715829741719552}
\\
14&\seqsplit{%
39383702222786673926973162381076522518507786667469626679296}
\\
15&\seqsplit{%
21150238597201101682069653858523961291120582792608950528438094020608}
\\
16&\seqsplit{%
4541066012746127523794114014353677637554989384469386170464309880710486%
6123776}
\\
17&\seqsplit{%
3901019453163597145804211405575323873685788280488780801307597598362378%
02714330925432832}
\\
18&\seqsplit{%
1340309287108592040631456418897079600732692262980135265243480395982367%
4593311775472092120554274816}
\\
19&\seqsplit{%
1842148103501576330259341331069776540862527169944272971274211116785218%
870704083461129503157001537715368624128}
\\
20&\seqsplit{%
1012715929581700585613681655233620159191511778259156891597878118890449%
964327025849578125695019384596395869369232406347776}
\\
21&\seqsplit{%
2226999137933663296854759620887600536728599120665505974525522236126730%
830375963094754875023816493012915661680538919480601183075172352}
\\
22&\seqsplit{%
1958882819870327857102429940141626435850026441141306774806940789976893%
2799351688152780693582858732032044214430634232341070338629544654409476%
079616}
\\
23&\seqsplit{%
6892215250354967563613516474489356348087240624399856716801311139461516%
4899621494982491897134953858815239771407402330600268481059000947825908%
7817615158466838528}
\\
24&\seqsplit{%
9699922563845747069044308682740474078733701159511468851773748229722845%
1981545090738063161699981220240205993932587704016447273787657646768245%
731249979622904089292863529025536}
\\
25&\seqsplit{%
5460572138288918776620468992538302632686412997582183752083816835747605%
8859290345594033710189777492653780301555646503875182087202771205827378%
610916433830771962609525703076373497856699400192}
\\
26&\seqsplit{%
1229611362324567385181693435444647111797633998042488153373322592347235%
8757823453395627069735987593284118973717878346456068996230821395538452%
7271383078443664719573242249793005840341908430393269940016644096}
\\
27&\seqsplit{%
1107535505069171020205551830214107300690995343511160001951072909548376%
3919639027812791931735784256700611870905436852789894897437593050126864%
6808350747654340639184589725113199321790313113546973161928185414575003%
4020302848}
\\
28&\seqsplit{%
3990317094336317030176581145332049442087970163809104474134807532636312%
8663018768040270694388452870772978887021408553895827567563637007320792%
9245103463594439815677681854732635334887430527595248585202852348447128%
95177470944146175165464576}
\\
29&\seqsplit{%
5750653007102810983547996986369455150362150697832248315460169454239279%
9880987624455950715648224777672349889601895555806235706891870183371327%
8521764820903271902075161790471056722809410709759428576064878482718537%
4575431043427940796170433584505195407081472}
\\
30&\seqsplit{%
3315025748456094057792643157269498180309170352367495651946898999048926%
6393254573288359165678046945621612592891497644930611840703319691246287%
1989397626926142439475238840940875440382615982664011047796215605508428%
1381620786366808071004299571412795511487391973836053341536256}
\\
31&\seqsplit{%
7643928942071022652578470171956531864343400797193588809119377488977118%
6792541641921090196821033690917449732880563786606996632881097600574589%
8352790494856376340734539199780801259837144779245892297256844944872605%
4682807492542078364196269325117130912358731372786165189583186680364295%
077756928}
\\
32&\seqsplit{%
7050280045376519365612273575246168932684752337929102256034766652779445%
7912869408560491827833395439477088900863363116423850048002004602030652%
6854157509230246825672574690336255481489493698602967432285727673450592%
4796639761453134055380876054431252248726326332551809279950425991119329%
7214705360019919598870593536}
\\
33&\seqsplit{%
2601094230711219664637248216759861459535198024830447058981904244294738%
5366415824023469261730261301411306774658146956672895571694027654261875%
2486267859808760651625476065965989962871158140160419160074037224472076%
1992661441554311117028001952087679245249158765354496650556528066802081%
556424181241146795769991650918148429463769055232}
\\
34&\seqsplit{%
3838537568491334060351260129642226795231093847561862324469690616566624%
1633374609010636727776607752006912222032736533868355400223858404042503%
4535766058498705506287874123465441854064485469856527634848182945126829%
5114965237222697796371360692191617021599879065662146203323628153426088%
95051724556916617080012240962674735484818364564906623227448122146816}
\\
35&\seqsplit{%
2265872643827308584893167121733730540026787270876211128940030742335031%
2205582963594824150369228646507980813407675179160358401039781603240418%
7018196653062566651656788521804473106057708761800543535820205521366317%
5504349232774388530152494239350806152037177276862912462253243540235245%
0849433833488299884919640114352564063620893567558230810958366739966903%
9793585189927518208}
\\
36&\seqsplit{%
5350140514784325936020510988865787132510303639745401439928638578092422%
9301633676082636929013084085012297012851984656785426362086102731088112%
0901559106635100101088418004669403093862103736867846635347567672375454%
1938395238439529954396619259174241204166562461290612310714692623173513%
8342421488975799578673590065340411875503441876314263893781914880518478%
8333407340981078743807991349143243063296}
\\
37&\seqsplit{%
5053064848503770909520541365286431776361456318098048799634319094098438%
8519039964670092252296743609973846675216766546090706751318828180579806%
2407117187475165827196129595609539254771248795731472535793697118508899%
0286985312827102240103407833122913197211412288528463317114783811620393%
9507615050968811197660306482289228733098846516199854368743341215933104%
01569976361987429854418300710295787925786216016777093717164032}
\\
38&\seqsplit{%
1908993926219090916722977584496712514977938855034953930707801384983325%
8819912673921383794678021731615790510053149230626027422172633111143393%
3104928123086843575507590622078703458523258400368699822151814061364275%
9378731643573901983239956119538413333844789917653361337598289691282138%
6724012366619520628550310284412310752883412734481241022096374653935910%
8932168253210565955867203678801299123371399131147414143200028709501911%
027019065327616}
\\
39&\seqsplit{%
2884790058508332813864705537551608834999095493993210545083391279494255%
6489676124230712544398992968438750986522813484779487947003321604837933%
3842615146934578559069911149793211568677127980066109034672604500749803%
5959778240836977133696872540703162381993756275019451471374690892284550%
2693993272681951079858284266592887244514831265907270149093840310573129%
2619540808561328513737884850498624037311694320958796272825240665017684%
31965031106103112243999592766093918208}
\\
40&\seqsplit{%
1743748592980377591776078068687714603386087732627138958375374491530697%
8736321487706955483621254420836351722145312068677309709008057747455772%
1382518774849558815714937838194335236548814512510108241950897974378685%
4761005023471324082322521955660338684275691406609135425932678631002958%
6008058873657812628814012116924725186345553840471787685908129166179546%
2997998581792242287459087414394219208339773097205152582841390621966537%
04464455495056473242107649807579196449066739750283725112344576}
\\
41&\seqsplit{%
4216125393891032502604383022471048720908846352082796164066226152869607%
0238186593051081060140916033941018249943989638244868637324569081251439%
2820174361236731312543334307073846693378798108148247041572726111345978%
9893228474877365376790406020410929087540317243068884050460327884533861%
4771258715359709681138734936994037959238583792353030954849620055868172%
0318654660010909493304250149643395951373270255421392339492181739934608%
3083876978768504831551721196572227091415681250930134687713105844828006%
8182815751012352}
\\
42&\seqsplit{%
4077586277949432422409655337038633010825975120869311029557823725085512%
4831959008793886100767949899514360985194002643571768026346990999701425%
5150371815549721202806119860754791061702894852372005871224703749705155%
8693624517472117187033426599932984527127890576435634951621969712901662%
7455421761557818421369347398851063632593787431162103629631565084776848%
3420451731370385903589690620532535074439285010050507327763335485265537%
4428390914178718248540707972387453752435148288338968878070273886142828%
71072341267379121357347891421825221001216}
\\
43&\seqsplit{%
1577439786592893740848542780322394461142355413461680877686702813908593%
6702374994275651720199431788793768701716433152185060926712989840106541%
3835015595551681384319701435416929805488718756600383230843372350535370%
2428473030228456309570975392774950884615245900679225862021617218447450%
3415112260317037401660647013472623338191884238979842281236236543356872%
4716132697477695282818144658663007138875818928700886038787772834644959%
4816511861121525173169394431012315295396439958530767465880788543232690%
7254403569876310992188077911800441476540003296822995963813904252928}
\\
44&\seqsplit{%
2440969839235405019006439710534680466396096677500342174481207587485360%
9834700253111354529650734680839859530697867726403502413877808687763286%
8650682004671857373127814999395701547147470067340642370583963962243160%
7021397145315923042143811116768895737267091591195503760120090031266407%
5042222049884566993399143823270163347103336819986728971247909597083156%
8791647970557474777659373226178868395863959454706033967750952741432676%
2857676621326579329572792911916720752128099832589166870091586251805948%
8207828970950372481169360997181699216141758959429178329779368998350935%
98930185966899488096256}
\\
45&\seqsplit{%
1510887149337364554100522156795204756852380429392088874693592649860299%
8533450909514161400306018440761626965498702000324385206103641956868421%
8940902276470506546056534302905617326907437845305671325519119450153196%
1023233493882044317095314988003817760569753856528986023055016119595289%
5294765747247530566858272471365850088038729532091791089597440408746166%
1345855439539837131884660816547128547839593410829409262023118711862131%
9396412154242910080370135580528486864502056654459252851307854901034766%
0101218980641773997535314943382700055540123355810938633406199654272254%
69650263478673904957490099217404283923827703939072}
\\
46&\seqsplit{%
3740775394014725453207990234144190903375768238705560011795668695143822%
4444812840200194125659064681609911755820723465488042357402351263879794%
8782972278297218072924916483836407896348154128788608474300919248088566%
1975911012532726601558118988047979320054827383950237461966699600894778%
5535385782160045476496104373567161477532790225455852968320658113984280%
9886527653402183164437947365774551866371853913505528962935153730913498%
7366671949789968240471264272136085769775564144452674671722147440027079%
5558641710997267300705224269822804035151007095907135786038783786729038%
8199873596973851473884177204393997354586905131485275380695162831794286%
9917696}
\\
47&\seqsplit{%
3704684510578505090117066529481152524471003462009616796792159221125475%
0307497929396600004261426056537350899564503464990316025491321938909084%
5604482738559486373092184507017120330526209394305999257257888185908583%
8507017771537175274754434457399530775286198124879856059952282070718506%
6201527517606064429660222277806456108856578129452461158908691968151072%
7280155192111931234796439461678039734090796292350217208392930897875900%
7527735941295074865670314585327057394721697337281305008553798992562484%
1470020435156504004045959997155697773215994058278567984409030070076143%
3051784331557617537265673374388833384633141048046442971777729334679011%
67903444081130150262624746647584768}
\\
48&\seqsplit{%
1467576732341144708339372675648522394043435656993663253025768559200964%
8925475356828656991975741692379720979326485898318355129591802800221461%
0326420415553960792308931813488895439541061591975343614040615921604235%
0613051958705970160813598180877726477375268621110613471063507415432083%
6541849048817209025193100799761454018821791386250974431620946309107428%
8276914432136709033791357299342132355241691124546824318222768930208646%
5016835455224313869839015619793773318035260239529949608883691919309247%
4216768668897093736638715385506275388467394941180118926059914664590577%
6961757211940616040986808906459449715783726415647810879087425397134895%
1533717861045355409089854242367344169805174031553430471894368256}
\\
49&\seqsplit{%
2325468157041375289571741753754833939865250528097400002116095721905752%
8553749109625643391720593896203677560662143569074450668752317179368863%
0715483836343808927991726680567770503142501212928321976610897501452105%
5805257175521165947503014903918996389021601108608588681667563450697540%
0507386597122722675521168497669744633586243320275405749828814063639145%
0342889704144878211433852564215250809804221209136413218959530026727826%
8296756713607532597692405714445946537132642789262365623362189521354754%
5503633392233338855731423274530937002902175385380994572360270701944845%
5549445931090278056634362019156100343449523367976227917475046410140697%
3018833805162461578510836340387125648810437983754752253452781922276245%
04685159507133886103552}
\\
50&\seqsplit{%
1473940552542617038638273880643841897199488117373648460498368369170426%
9756890580475784824108118285313072708850589145369195641791169216374139%
5898196837016865151660821306184590343819210292951343194120736974043603%
8474878960631065709538540144445072907308675256486743932998429143025252%
1696199534234373217440316818446428714647967499867106245101565801812889%
0387231323137544059956722085215635634465593931797128114209037710026872%
0198720533311344144021911531645530027740726541526675875498704782324971%
5607576627768423602978535802729196377424849924745972501997212205474345%
8863315082922703068961372560006779115134372803124981771081872889247993%
0695035376343007308818225787822629385661891950823592099366544366847614%
88568290353609948898748859794730072019772042029039616}
\\
51&\seqsplit{%
3736883252262683527165624376181671379067373040643863329101629815752793%
1487438447100047665914604008411435672686256023625189630677317662404135%
5066758236803243331138067292174784148282610104331475297032132283680471%
5526446933812348796925857864061923861555238245903140319247700504842948%
6570538764407097432134866365367769321117400456143998669308698296189390%
4942724002529650671352666016090902285434640040285505323035927234882161%
8624339435532869613335393841755863119573126947904328633231046406637556%
4352162602847296735185437614222396773600372761954801032939924165195295%
3706586903958310149982334608436174808926849136685358755068520322055928%
5965896385764313910323356272359532762009592023321488741654446066462688%
9969068504203218095505180782087160868251249541616529902930997382394090%
0604470099968}
\\
52&\seqsplit{%
3789649838170920215266495495786307834281286058451592060609161651898632%
3296716801250727496668200895639275281341366712683160584906483334074584%
4461354806489723868452744067142707260839510255606013243828797611556773%
4873239887750945650926417035263124174678600232503256242499553645514957%
5471443918989769716338330265871948290951035182555117306913058634847449%
4550707310986793000831005554706139227103843018545271227559288442419349%
1507599083658130819755098179521009125325468810395220157750076134712964%
2618402328070014841560417972118999229042179681121279496706928996645014%
2567069671734478793698003566818348606023147028216822149020808870619279%
1955644230700776573385830637412581675708252203047426384585671565726843%
6910666259144199487778163240761196421460360613788899486454617275385769%
49211888240642764936160565360271919432073216}
\\
53&\seqsplit{%
1537264605443890330426467441041043915280802782229483282399023879221292%
8687458788837712195038098400598300370507871049669344557287029860691816%
7526538612369628789221172406072071562896299784660497541105663838719689%
4317527219901025131576680714946168131922391201328035814117289549355731%
2630272282702858200073235529591722944479250433100634234803912250721782%
5052201688093885814034140373922612895375030647554177583682998437748849%
6848286538922779227927637339116634012410776784965361855408926409909555%
2159160687072155814799873351269798875466338689164270705804470162232910%
8903741819274169947526124755097960869558299110378304225034476326790146%
0552223876262413042709146538687402579686097008631377626986973186875270%
8923107162388743212666310675452890392373929808865120263539143252856296%
4826974717992043410045600471138148007786278015607074159270190691942884%
442112}
\\
54&\seqsplit{%
2494354431744722008927824820227677902424111865819473758681096270729423%
9017037477748945499504466156402884779576596641288453655973587575637402%
4113033330761081802881753245342130283831217995372929120982618714345216%
9291808961495175370603269963021885991741418390870736839255726135505228%
9007537937257887327685557203216629894434290194572388202673221621472545%
0009799953779668390342430872057182832406725672397220248530184087622966%
1411079812958584895992694162046214167830233957797413273799669808092460%
3644338247911118126171236148448845162887520145609985569431266589158546%
7188758041874094247525633450295459656561377760373604990265867966337226%
2995374316790878501966591810888382124016894911728419214843269379633297%
0261432551025929980820764873637856537997262642661514029866298018731270%
3556247360569407398186447300715492327567524115258029652949262051976880%
79746334543569659273614886484528070656}
\\
55&\seqsplit{%
1618928585002566308974672337319522685726218379095843182797389088590883%
8114545848225953587125763576489275991625630748194204687415625133978712%
2669947510035040600579804324023716988604328408941371196022121442201342%
1326133218523771435372362518479448687325857095185064585758640800245306%
8250346086239860260390536383186820579887870535671646330605536557317938%
2656142482859086879176493491802576101618279732336166618356126370460343%
9977106506147075998192686570989063946697569829408870767309599892494516%
6359699264492381559565910544335641860167595576666320569610988189092194%
0580466314580199066814291099298191048328595981293989894610718113688846%
9691155694173428996896320644098636168721056964460921074695442176344533%
1774706218645250407909687759092630771430161537879792893008394350999265%
6250898880271215376544727310395992752916592376533815357211301335639834%
4610171114365661961093355888054696462161682209508532568856179484970188%
8}
\\
56&\seqsplit{%
4202978903050531676317502968592305314077487083693429347352277250819500%
7584842135615530468743004929215415702553237673436683067569322774626542%
1199920402916865769206488438774434632061206999452621515614940812678402%
0517307657895503971321034448880909476116680045540302191994900942036551%
3093872659845007715957368303474973715017675146583153631698394398346952%
4249114861784986583605890630529200626601382210962151607662098442925319%
6761799857168259276425877471062609138089165304242975653349847414156330%
2830043927324122428955479939549759172977767680879413123715845436778364%
2393704981811864947834955784693434462499163153740144716450311830766853%
0654990137804222182236031443437758564595451486344257096791862606053951%
7457006061425389901893047385611479791806302694534102445000805973337547%
2728505049264797547454514494552782231704748498415143267273993637168106%
2316746070177255334160945765030828294242342696422986300435442588045523%
4760201086650241871983982564868096}
\\
57&\seqsplit{%
4364622830959485001760118412460641058392259158642714870796007271164151%
5281016644437290752651885675861025035121080429774877788301321204761420%
3118152393075961887476119067883650653467759566398876949418746479750653%
6727924718695375495785629729438862725230749909692216320697401709830431%
0855003052699113086527159034739719862157244351271490941989360301262803%
1384915611114851701053261753959299708683746291616419118262988026491782%
0334582517071103292411410227661523362257470143695550137327937508068806%
9564708344673868238894658750649670190685682517848265753228126754827627%
2585472719783981782048072697670031008728665167696054217225722071637086%
5848879033916649766372024277587754731814554869881038272276063551207982%
4642225936321062218139431370291149117328022597327540145165632228994194%
1485310845435917440020587006831873119359767729047817522304069238639533%
8466910873157724885061195832878030575777081365395005800174304921987075%
00824398384318743508763844417315346021665184467576058199231925583872}
\\
58&\seqsplit{%
1812993393110425842214698122234231772405199159137435055146333629637537%
2937763082684721015414150485639112368128063662347952408927347229636881%
9637750337025894241061336299285416984853019243731707038106342175072681%
9136059895573792178574685542621886385696150073318551735215727155460115%
2669987255632750457470213228080444116376232567559879453226772342939108%
3456890305139694748044893374387602278386263629373013150074311198215433%
8427197973284742915409987919522132387351980104212205888649915638538386%
0166704677680505427698805009112742112717260038134398176273652229679929%
2037638498753791839677173581881079070825303113513614794477478195082794%
4461460320157128950252353120868280575440182045120402970643756938755380%
8964685488707234642057987368988697165295770374037559303224443416784746%
6281262071455302616479156074981681288303363183982979239499914349636535%
0529556214797038575929898688410764297271160509513675978742863798795810%
1277524647719560116718123210281887663070304507656679538609106589317525%
279128313571041632604869892243456}
\\
59&\seqsplit{%
3012351967869331589452401702330200498889101082435075382841332615406708%
7478614980087492476339362628935501856264982214191457793355667588588036%
1654161063235647309480617178813843461388783301553191304496863737596783%
4371814622998524365390371709127685611545198403988381628732367451825454%
2976138540449299771094958487035250212660024142931338098488971704507841%
0944204860176616989042308757215675369933680681474596686140994694662886%
6098750726096728440630019834016535249160883893494791810501953137511541%
7630649370317989159575017376185282659697687166444677897592722119480561%
5827818220816290587603675573219850598148148802484468656292462157107382%
1437678001586326063265393070550645327653163888816716588463662954335962%
4673360683778116473221374200841147859859797796519877184047432964950065%
2538063845642212925409891463887575405705753911462835427297086888841959%
2247092014097828467459459279787940124719106800594312231294186138466167%
6275740461551041343371761170053384410704789059782885386696315673024205%
67946006631053316493856390601486162843007873050369676644734728142848}
\\
60&\seqsplit{%
2002051284424849528658218615896102804343911828798299643640523617822692%
3202621651818158803200357581859043980227523940129431884715054502166751%
6108537377559027647238654919172320966834628735375696435293950292161422%
4973150275904857114639217397540575490774362029400172830755485116635702%
1566639294437851647941991039669139135704979346133502422576772550717077%
9605809754827013050316627380595192969296745763885436461290663405515953%
0259733294591961092611579383200353225666324950059195639359408050467467%
6521248665294667572501056261470980089886357149629200152071802008488363%
4823664861236385038299510089639420338840346126474208898831801051609326%
1316490901636710732142068560611334861678539897707737864592120622858736%
2777128369011871496771339263199415397950958594311582111805478342917392%
1829268890933454321758084628996857905274423191590322596980512291993968%
8792211378682434882554809677354689680296376578526395922768330928667529%
6879045127685275445423744116212132442836749685987765864462075414533658%
7657758840199756993552085260357497917127290366030841122160845052151894%
6359744770894275848107123619659776}
\\
\hline \hline
\end{longtable}

        }\label{end20437s}

{   \centering
  \label{begin20498s}
\begin{longtable}{||l|L{5.250000in}||}
%\caption{Number of invertible matrices which are squares, time 204.98s} 
\caption{Number of invertible matrices which are squares} 
  \label{tab:gl}\\
  \hline \hline
$n$ & $b(n)$ \\
 \hline 
  \endfirsthead
  \hline \hline
$n$ & $b(n)$ \\
 \hline 
  \endhead
  \hline \hline
  \multicolumn{2}{c}{Number of invertible matrices that are squares} \\
  \endfoot
  \endlastfoot
1&\seqsplit{%
1}
\\
2&\seqsplit{%
3}
\\
3&\seqsplit{%
126}
\\
4&\seqsplit{%
11340}
\\
5&\seqsplit{%
5940840}
\\
6&\seqsplit{%
12076523928}
\\
7&\seqsplit{%
95052257647200}
\\
8&\seqsplit{%
3153668941285723200}
\\
9&\seqsplit{%
406198470650573931200640}
\\
10&\seqsplit{%
215366179177149634500004545792}
\\
11&\seqsplit{%
447870507819487666185959047316144640}
\\
12&\seqsplit{%
3770394197251690930118967532374966498493440}
\\
13&\seqsplit{%
126205342254129164806148123600990735262978861434880}
\\
14&\seqsplit{%
16960349752279776751561660450391351891796348875427924676608}
\\
15&\seqsplit{%
9099421507577175992020382974051470776190067413539210813509193441280}
\\
16&\seqsplit{%
1954628853639228355030574032786595919541842428407509777250056186030433%
3455360}
\\
17&\seqsplit{%
1678712601918494322160545909594022660932492540796435169158901524539401%
98413194412195840}
\\
18&\seqsplit{%
5768516087464993968410424003693368746051677541403587273429368264074190%
563974746362542357433352192}
\\
19&\seqsplit{%
7927815728023521061284828060734799773685486908763424351887850385348492%
15731409861941064155848716971103354880}
\\
20&\seqsplit{%
4358441796213949528280283318775904914878476215263889071132907528084057%
19839273234051108126200323648090580227063866982400}
\\
21&\seqsplit{%
9584230891108355033689496585456324375236727469083890578514280468017158%
70107112064975179622555650825480533056258670356671991237836800}
\\
22&\seqsplit{%
8430410870685601645830764475814222686627609653511220501860019402401487%
3147614211019255968334586197970987502330989051799449348720976133170428%
64128}
\\
23&\seqsplit{%
2966181215042382920178973222982059020914540568145327588723540102487829%
2265897888524948467295669699634781456169755011079456694590444094122612%
6633036404542668800}
\\
24&\seqsplit{%
4174531025157668548914442907453932540694364331567257672771939204273675%
6129314900748480872156058183426995512636315501224225218535451829746865%
476526185532025312537189411717120}
\\
25&\seqsplit{%
2350051430802267995436236790390354850401529913240860764141274367099280%
3634023305107756244730458243055803909468593647529576389339752372461259%
258325629682822882877248453550684375328269271040}
\\
26&\seqsplit{%
5291845855829269442324846824211698671185831331871465134605069595567015%
7325126471939028717329703339117921640419065145791252263714377980812693%
310011201334134110228054691703048469617143834039649837789478912}
\\
27&\seqsplit{%
4766470860709279561616903575060635552310695214456424204234830229733635%
5226168457964654858549028217900717138898730310518703433739826235506430%
3218819137992655931419391480613728565503408095285972375374755261178263%
204003840}
\\
28&\seqsplit{%
1717302096951472857728688027887992401411103524533732433531976480656492%
0014077649364793711155359996942891297779049163815304706809833128259920%
3132270971419325925807910377015733721044990591602340340294499225227008%
64323766165076299271372800}
\\
29&\seqsplit{%
2474893158500339996288107226767709069433472046654938468532183843401638%
6869219318356141848297978719844137386263649556404948690471441639635102%
3233641817863480282386594557103733387299687350052232449741820141699928%
9251389178449189887719672310402844154920960}
\\
30&\seqsplit{%
1426678760471499119402249736723250148785682303413347584234127160645500%
0995444075354050854304663898562913848723574175181631901810900700373559%
0234966290531067553872537859060054808513372049017137130777079371292135%
7698494771858292984876209754057007391814879322793193188098048}
\\
31&\seqsplit{%
3289697262955869298499447185989976140212381405380479555215849055162305%
4332320166301497529693458562510905831629318691271663679388036985295122%
9319021150527066996012764659167824862320878491839430662690457532805085%
1654068449297955725628593448426166601875632258606293678253413122922069%
801041920}
\\
32&\seqsplit{%
3034210160252263588232141235371128717793154576876828912457611080124032%
3049212313030478406545507620043444989683591166659943220408118237543380%
3724360296110485118810913825764702916018406919855845878424440777662251%
5645241963475361406672034945137133064165111145527337598042522513710347%
7376611185184437850477690880}
\\
33&\seqsplit{%
1119425968721360105270446033221497173318434598626958537376096452435297%
3930109784098213968568525783141210174434640008896718178621105615890143%
7659822916873624085015354331394264291705969328525956499063276402928620%
6430284517248567175334718143893537132254060174886688218467099532413693%
671735224850101896593082186315783855071391907840}
\\
34&\seqsplit{%
1651981145840914769648670637552979884184712350758098598200288324901807%
0091584905028348225560351646252095713425508596033557517539593911997379%
4417878240198576940552414887616090824225876829256904102181523550647594%
6555415073677649811178155130891711188687672569079547477207671485177180%
81583540979919908303737778487650956931269569175756596523172083269632}
\\
35&\seqsplit{%
9751575499899641723095548902342938922420557219805346483159841645647488%
1440415524263846585290554697423869138590564154149370710752083219752685%
3496936634095124178451274392151907787209098750516056597457205688457671%
7262543713358054474605149836401416820165889614067399109867600494633734%
2754464207029187617229321149786590603361374647562098500183890276083760%
008682250572922880}
\\
36&\seqsplit{%
2302525663633796537126645512646129929514167528478094073274183708568958%
1777502259794606211876609782884419248820289513979289958576308055097263%
4720805363114226629714482723735979605003019907531663646604601133568190%
9840361559565464331978453234822751866504549840062599171370355638339176%
2698558948310187756991432661673594752755812323226964459497010666433839%
5253135163876964531358774442536520908800}
\\
37&\seqsplit{%
2174674004526455294747852629724623596534766529640779471088687184675721%
2992057525146036122821826319376249546198858012688016942094992021559832%
5059312452391329779091726138458893042106204907833201852468640007563303%
2485526448794056744282366310466624409456532710730341633302536264206745%
9002378899711055449544040916638002619346623931953026500704352750923214%
35856387556652293198540238497777007966161121016580698716241920}
\\
38&\seqsplit{%
8215686102910429044261197191399221403682933882761543958931315529615936%
5504654181985252765043936481423747144548628717679645455379685539486895%
1483191634168961118897023165570466760988034359890583680872815286765181%
0365832932410292702970553220149030457956893889795987713717319612229830%
8002729316653792895819676323160012892511127680235423544377075893515817%
0590843899831157257144485767571030285386090750237367888800320943767357%
96186818019328}
\\
39&\seqsplit{%
1241519382052440404634180079698427169988164424982066285480701699655222%
6231970225034736660392230563365136039818523931475961662838248308619105%
2047025152488775580538544149076506455363721403647425385761462148522909%
1203997335795743337410914355084709452026071504157159420538621036235917%
1294956850425928044230492029558346205446504582206240773218844547982479%
1702782916503973971061343736158990781177105611580067141872650332902532%
00421300236469019510828743790522531840}
\\
40&\seqsplit{%
7504524182237522321440305464929611754688660222214967711374900192802224%
2181608595969617412133269744211240265195238465490715792800095729245201%
5143784992686929799551186028921067643075428947632221646454456211944478%
9426121927103473227779164541928359738981458249970775177245649879331002%
9031862380055554216747962923829990415195800785361158614206962979174307%
1837218326158878906057627805582523897525116392320588301469214380891233%
3776322474352941794654424357980879870646253820147176728166400}
\\
41&\seqsplit{%
1814482609606787535793852258473337802891922272240870411497698272498370%
8977416192339048653720763964131953688352451012530255625870589054818460%
3255198783941418595193894581396414177682279588129421749709313402509342%
1521672647469151379778475089487120855560851167256476320221926166336812%
6178340352764130224610643847118869866667615180419357620904240539716270%
0421287054142426756097049641514807570952459709008099835792755453390103%
9388791240770210250818030190423990450578707940661527031483026371844792%
8566093552025600}
\\
42&\seqsplit{%
1754859900774827301314835053502975654906711675227737781939167418758698%
7252867987535872831513889162516900529732520742803489662089674926208286%
2000275803847972287757192240918853186806362329687830149228541345649748%
1351093799497827737901670857127099763815072460670369434025663295902519%
6828186219909401093991331448374858022625636586233764551266189216449645%
3462738642571335448328419983592418929372415320128306592049153917121208%
5938553779084222450992022662108694015878733548599343788373673511260930%
76581969882047233236438251810727418396672}
\\
43&\seqsplit{%
6788785420372040569663059908507422301505131555491501372413752697687684%
7444448791960278989509699128210326036920378223802108697956363502996849%
3029593629735159220282874454314456601385845321314835643582848699624584%
2639898327889670040058365687774236068550094345482767713469487116120596%
3333198011710986258562555249801439993830340100289531257089672758749879%
9407756760576706810720360986363274735867485812710296953767048685575152%
6721114682951619377561240437724520915049997076170248903926365947940023%
354828926684239315809951487729545810066923200601819006035086540800}
\\
44&\seqsplit{%
1050513661245833892310077390339868962071512092977739782801006323736824%
8709688935763986139977376916677420837397179551047716681571770436172003%
8896616105780067766580426541535876462986918549310392362787861689424302%
9835048829759019482982420709986844938234814459254663938654652900097230%
8803661384157971899469620351091222976602219196375550547660832236690543%
4490752975797284688323937802291186242985371394789888686958159408699421%
7626900678510364558645354003399232222519673155591733938672257206691154%
7522284294994695054959822426731481150205827247651459298097338848042207%
76689219729262714552320}
\\
45&\seqsplit{%
6502364615374032578933774327351190763016878300450482272114573238000491%
6712582830300673910615442843237086770642094209802258127155351499189043%
9806263233739458015329785200007572137588697308565964431243345336749636%
6534582911714286595676919833881951343614813939877618291710794250440031%
1383181676158669231968727848476240541226126763767335294978199602111036%
1672229156441035765590180492979168385803149762388850485634343030136936%
8849208226397620780018684349937595127558569687333770273823854041667469%
3771516370410859740033144915948327521454000642913667966663998296016394%
5696727004476904468946510180896787072550541721600}
\\
46&\seqsplit{%
1609907501479272830463863159087089023572107604463249450946096019576029%
4309592289137999777658171948078549931656899231633222167671447060871292%
6380064493257824280372989849721195800021709008309331519923164931746864%
7944942260068422571070840027044462812622815046542047523638657429120741%
2986755335237249221393105108266715876897817758369043461232068761082792%
9332541202601574690154168738048304789132871186363612339371277562922936%
4775499628735969287518629421212817978438585495276378751621745106139288%
4436919534491566701732189600918313480999253793117838015525440795819463%
8886265086209007373211460610393546534162027651182285435279993013000899%
1858688}
\\
47&\seqsplit{%
1594375164502394385774339867205914211856530337922439410897593713860368%
9081494633547249728329373167674823184201563815119808562215838332537872%
4797002122689189347647530595307472676279264828503002280663441683551169%
7651732425291036606870459201427998482054959077862364923399411638270813%
5188555004098493049810196252885203805006511434615368538952877768728084%
3810702116429038026484073653043827535843782594443685787407005051268848%
7826847771174807400781799341848001556940543436275069581452481539661752%
5993292686450591378316433222043810045998215079683328537928791778197735%
7839641727515077106286717281061694861503039757948551567152900236557466%
79689670046753761884413863982530560}
\\
48&\seqsplit{%
6315970732093582400356931861288836372639658751993263582950548170183311%
3354582368329510591701930093695624849230664680123671429129240484908600%
0791888080019999793684510776088850979486587413694669511525675921013972%
3085832484530536781284595366813980317561662252003544637285445644085321%
5357385002286236729988608646161644153996594102460689722284263221255279%
7973868928659589673332714978478456720173049729328897224379147065784200%
3597890545211170188170696348435534528342195266337424331415576847096758%
0583108260561236556788615823171539558465034022280498117031723793594260%
8399744100323044593530946528464773173776357918624820062149300384385622%
795605339782631770132086699470151542088945652508021842572738560}
\\
49&\seqsplit{%
1000805511195413921527504786944392314975907872147139956071212396465035%
6670865972648370262683470592525422300395615296146990819699168560162500%
6810494347228265428961897577723954980557826473035476515646534132058337%
3122159574991050058941463622606112350942769997690726351305620423735589%
6010612970426588498887814478860158848765448853478576970911611657829362%
6682143537214460180018338196841259153757000181398778931470705711648061%
9736795024610175561026601079703143553864489768986151927624187800821072%
6857294332069215540140399151744117165498186817503319795032632148327497%
8320849412662293177510195400207081977156928753944645045655166244708104%
9507707878677644262708778189950655780012901588318120511849110115576337%
08894011211321127731200}
\\
50&\seqsplit{%
6343358534892543381097580210476088736395751118941386590518411221643870%
5160829266709024216391457714553794019136968109796989561575154158698795%
1180029975482255647134700305142397837284898327084130101131192908813932%
1612894353632862030935067010472125629744737166258064297209146168366996%
8578964448122604592117072957433498652659429160360956237790030395613164%
5587502232267404696273870435258381918746702316812998801459327921665512%
7279131704915589653665165457363724840432234976562301595041045166361468%
9216064017501662498992220894496359382736641235653535347212643306040162%
0628894573737606118917536239857690643661465520889720795032440996740045%
3919167097554821483582882705191458388219380453331357254304155220478185%
1938208118527315502083056957618171534461535824904192}
\\
51&\seqsplit{%
1608232450843926382985303968395501099287130075727167172554419192895457%
6590968117788524918854201232884205369242277159584970847110221400514125%
5268980057611509365536844979853940158280100351095977429955128518010357%
1466538856605076092034231252684263039674881227016399725680569032253148%
9539663102809405663807595058095748485412796263869477083115120942489896%
4736084706900864305393805664245274500573127450625164884621441740546146%
7891530170856797426875025316655988646092513258624764662351375872021337%
9656219452475450024119010886817732243766322682500466940361816930047721%
9204695654998608224553441815800126286697314326310147767337872188641235%
3201140093348195929097870228071505595848926743299589064207865630330087%
0157190183495217477079490941867981367969691585550486123888056207092938%
0393091072000}
\\
52&\seqsplit{%
1630941465295031168968428674021791618767438435218025119656901880285990%
2691792389154434577659306282933036454718294907867642763060667235469540%
2077481219884966097949088948432824906875075012851536270104115970107848%
1172138217016370892649270810006128695821218356175192369190268195927552%
9464690979133094437072567611887736897276753638330383459179971539074893%
8193541033463993981636254300806903796814232592348021531993381801965138%
0506040108912246120939918135916916947490055872729823642055742267153119%
2386602121940954010020404853147192943173031567760986139476493285446150%
6302767439854516270401732841948025442619456486354084027474282327160728%
8456810502197937456872475202717383409633594167124872379188302614687927%
6911867350261910954792109614562935834609578184764450312123962161294427%
28145308814074774364242407920119758307983360}
\\
53&\seqsplit{%
6615884567738781981054103135658284199000732606367486202476550889465814%
2211272146615587188638330721398460912924615772053303008154522429911920%
9925974260279656270523848810027470768579550017647241921714794779419686%
8052076283862024450754883805615926136114758004149639577389138367545012%
2333970588937888152101554352701828254519776228104531977306829384775397%
4934702858617834002309979946436904262453002406893016739556024263227569%
5200282773785102778455246467049986314536002257711286177547745238448620%
8892723004370578456563541301016844347268894790642100355283693470427159%
5029102636446821417110683465293036813566368635056829542704919117278868%
4629824853453346884334511225908977123938570808066987047010395832218143%
6936568609450722614527599497703941145596381012290363377561594370969258%
1888799564497303940703775537205689774537528892537242642988330921809149%
95200}
\\
54&\seqsplit{%
1073488645546843549386192319455915206274482315875304772330288511973417%
9993421858724656210876889422857452144488283214572567140438600824284071%
2901939711875861776852103169608269086607249778154401854734907992570914%
0015940695784124512931807242869706828601171088421603057690188788653613%
3865339469681010714351583033226766631786313224576826102639940145673362%
6332864730909404174694637414001380577197478128348686278466216740940813%
5157485489509027537345656340220412010465480861481637198674373470236290%
6273659022995692275486735318685319616527833890683098382045418252167770%
8773156408835152960682872521811714322275746921287375748888239161680237%
3092121667286844947506124433114603254318293277653416049916513791854108%
7231576177116527800956816563363027330363230987594394022109488981203612%
8254410813015446233590963695588175887194558867137162527547761083047257%
40696178236584135813707763397299273728}
\\
55&\seqsplit{%
6967339652432116536045621449250887475143835124264234673020051731671962%
6400613206321280091261873574805528558651907817734195270995571820706932%
9686028566834645948907264494973591827006460658855079394298478350336164%
0380546880814715551267855333748519865275412167093339946686226415594045%
6000653894992067382098379062048229238712820814431567001744542025541845%
8254259795583190182407305429879908416392862801410673761893164495764785%
1836540108655910539587412081371653441324341263526815472063786038838045%
6010286407436720429766724182462380438412269308891570220222755266070619%
2812660375261419029436119579703309811030876713077641713905725863075816%
1331088901992434657702757239237821912915612176966435555856318397143869%
1946613103622605549752270227075008448719197151971824192811577132238345%
6789123524310171888847800862545165994659421298960617118889002466261759%
2857327615576762394846528542974256565319711932918827165829270108897280}
\\
56&\seqsplit{%
1808824789483419098220718614084563909635733521119176206219630878126587%
0258168293236260722171828000897060887274632134992140325647973977659331%
2022422236654196439980264927157459377797180940748837475045445426714102%
9780678123164524439304284834590544604946927920288406940027262293421656%
3933005004290878956090060418786684041488513209185946207204259028500907%
2282362636420513691905477526392140085205443717553465179983125941891130%
9736426753154504297440159235237528403010827541127292904281202133592511%
8630469598253545229502611091427892213742361496450435010542070108714114%
4337033314155158901137369399439960284935070311067953932753165973641818%
3943603928819154263581009157995737273018877338208839775893313365070333%
7831150150187792132455605328878885261247647958233878413426753740662698%
8084391752125496561219360870629013161293633339316478447718066076904803%
9807851898281188606341331679977260714744933325410737498170552467838442%
4224945172843967663330735203287040}
\\
57&\seqsplit{%
1878391054414936409238293328326772596304744557431662023329523620730810%
0937830120103200133240390648417769351744285504088468913585244391684782%
4737917161272950622213660556686274573913689898555824922589428200176315%
5138738855837838997107127929848810634789387948167439249236670083220825%
4934352795334977256754287366219248134235485309402107456985717753613088%
3037547075473586219270009761804079239782603087512542944126778614374726%
6364470139319396426269466952623861007722837567428138133324882018583204%
6984787900313953491099090482518300052241296537379530644845751524042052%
5079525195418913886012690809564213601312042089121723210243930261941423%
6929495838036403349667925588819490677189495007316505464955253608496414%
0979690405984985149751659521430760100582073007911412048370401084900352%
5378297433674818250603558801227575871753077221173792322009500754299611%
5812721189271870699350615096294193928580429009097110910820164408941865%
84598014125800096455536445001873059874212663781268569798038226206720}
\\
58&\seqsplit{%
7802531176750875664407086933242519649486999822857663607365961233566009%
1854672091317202592989041406335732154625179228852557197495637603999373%
2212380216878716856127685561386345087849198922951086215011652848780268%
5462919460277099024829558906682339701124128324824895554854311660964669%
0028872338656052276816176135830922538502105648445034949248053076633511%
2747866811415834851697778682591243685934809731248120065768446761080881%
0212864221971836954603524815951921618443475300290393233906764117846712%
9968645088849254752106494981486664674466302703795585076784243771658342%
2179692500918584942318940682167149052104900302838135837399198552320466%
6032035329545241382059899502731973360692497462365293993660790258047413%
7213366409464639101300897917249432217872517923250607121238835743753379%
2695012865447510359805358041154627389758550483167350132743996852755848%
5903177753163712160499122300063903997644671382496693254166323693201108%
6263108331668764615352044886828416890649104901640078403102904832640996%
77477483770726135660147168509952}
\\
59&\seqsplit{%
1296417859765236090650049477143720733026536839319195507243728308158042%
5926813427452134008559437794302512278373670876515830566403019092209244%
2296765705898124208264580325449393691990177561974666761693953081213393%
4583140529570044054385513939679698519998783110793602828174764509088243%
0767026354705578756472038280855082071949967459610162749217297819988672%
3376471283803552224373143427098096022506314831275227070280274556440802%
6471476130981940801474702082436737307215800808045775961610020065276429%
8073764265708573026609379668908944982880630360832805407651326783924521%
8928249708073763736278582171459943322550789323557495397293543098079040%
2141561802442626213034032894202929703073707378401507784998799609648446%
2544564032777829729153114824610434205142175215648777808010846589837848%
5623441933238054329391819289808421700241556253791055070263856879497825%
1815541535661923877415348085248900756672033696842587037281544525374454%
3980038957900570520797529850768403256582073620524515297597093623499466%
32913920935480626905259892165470109537035111461765795673633015726080}
\\
60&\seqsplit{%
8616174567177574039916945427756785548518269865671798112997861552869740%
4978250375645184816443796738665181944417395415647861271553377375172573%
2040293815480301513797105642348651413962558449799674570245185304284055%
5113457408230064486102983510018056541193986125509967583449372944513686%
2014456854249783354063983457803507432659867546206587031391115109661576%
1516503950524064473134542712935711368808892154109606995841921790171765%
4893005967301395968656955403442490242166179811924853658409252017625986%
3906377380326741084120889097559846956027267560044221965159675319491944%
4329858601956832355668323043271718482856938415308981787209780110675524%
9261984676636588292027767560541974687923490737584058450361418191496554%
9380204996261913647129813735792291433712596853783222167675731678946910%
2544961918899665277861902312300283842359510665174220836052157261946625%
1978436609374754126564490582318478421494718185779380836210887341682756%
4604955635490336380176498300587838636742526281871869641022200525646024%
2806211868610934676397909273611681445706842617379544353781188146242432%
614618218707323073176507357593600}
\\

\hline \hline
\end{longtable}
  } \label{end20498s}

\nocite{MR2041227}
\nocite{MR0441665}
\nocite{MR1507868}
\nocite{MR1696313}
\nocite{MR0200258}
\nocite{MR0520733}
\nocite{MR2527719}
\nocite{MR2833514}
\nocite{MR3089699}
\nocite{MR0653502}
\nocite{MR2261030}
\nocite{MR937520}
\nocite{sage}
\nocite{OEIS}
\nocite{MR2833479}
\nocite{MR1329462}
\nocite{MR2476780}
\nocite{MR2483235}
\nocite{MR2888238}
\bibliography{sloane}

\begin{thebibliography}{10}

\bibitem{OEIS}
OEIS Foundation~Inc. (2015).
\newblock {O}nline {E}ncyclopedia of {I}nteger {S}equences.
\newblock published electronically at \texttt{http://www.oeis.org}.

\bibitem{MR0200258}
George~E. Andrews.
\newblock On the theorems of {W}atson and {D}ragonette for {R}amanujan's mock
  theta functions.
\newblock {\em American J. Math.}, 88:454--490, 1966.

\bibitem{MR2261030}
Daniel~J. Bernstein and Jonathan~P. Sorenson.
\newblock Modular exponentiation via the explicit {C}hinese remainder theorem.
\newblock {\em Math. Comp.}, 76(257):443--454, 2007.

\bibitem{MR2527719}
Alin Bostan and {\'E}ric Schost.
\newblock A simple and fast algorithm for computing exponentials of power
  series.
\newblock {\em Information Processing Letters}, 109:754--756, 2009.

\bibitem{MR0520733}
Richard~P. Brent and H.~T. Kung.
\newblock Fast algorithms for manipulating formal power series.
\newblock {\em J. ACM}, 25(4):581--595, 1978.

\bibitem{MR1507868}
Leonard~Eugene Dickson.
\newblock Canonical form of a linear homogeneous substitution in a {G}alois
  field.
\newblock {\em Amer. J. Math.}, 22(2):121--137, 1900.
\newblock also in {\cite[vol. 1, page 71]{MR0441665}}.

\bibitem{MR0441665}
Leonard~Eugene Dickson.
\newblock {\em The collected mathematical papers of {Leonard Eugene Dickson}},
  volume~I.
\newblock AMS Chelsea, New York, 1975.
\newblock Editor. A.~Adrian, Albert.

\bibitem{MR2483235}
Phillipe Flajolet and Robert Sedgewick.
\newblock {\em Analytic Combinatorics}.
\newblock Cambridge University Press, Cambridge, 2009.

\bibitem{MR1696313}
Jason Fulman.
\newblock Cycle indices for the finite classical groups.
\newblock {\em J. Group Thy.}, 2(3):251--289, 1999.

\bibitem{MR2888238}
Jason Fulman and Robert Guralnick.
\newblock Bounds on the number and sizes of conjugacy classes in finite
  {C}hevalley groups with applications to derangements.
\newblock {\em Trans. AMS}, 2012.

\bibitem{MR3089699}
Jason Fulman and Robert Guralnick.
\newblock The number of regular semisimple conjugacy classes in the finite
  classical groups.
\newblock {\em Linear Algebra and its Applications}, 439:488--503, 2013.

\bibitem{MR0072878}
J.~A. Green.
\newblock The characters of the finite general linear group.
\newblock {\em Trans. AMS}, 80(2):402--447, 1955.

\bibitem{MR2833479}
Robert Guralnick and Gunter Malle.
\newblock Products of conjugacy classes and fixed point spaces.
\newblock {\em J. AMS}, 25(1):77--121, 2012.

\bibitem{MR604337}
Joseph~P.~S. Kung.
\newblock The cycle structure of a linear transformation over a finite field.
\newblock {\em Linear Algebra and its Applications}, 36:141--155, 1981.

\bibitem{MR1878556}
Serge Lang.
\newblock {\em Algebra}, volume 211 of {\em Graduate Texts in Mathematics}.
\newblock Springer-Verlag, New York, Berlin, 2002.

\bibitem{MR2041227}
Michael Larsen.
\newblock Word maps have large image.
\newblock {\em Israel J. Math.}, 139:149--156, 2004.

\bibitem{MR2476780}
Michael Larsen and Aner Shalev.
\newblock Fibers of word maps and some applications.
\newblock {\em J. Algebra}, 354:36--48, 2012.

\bibitem{MR1329462}
G.~I. Lehrer.
\newblock A toral configuration space and regular semisimple conjugacy classes.
\newblock {\em Math. Proc. Cambridge Philosophical Soc.}, 118(1):105--113,
  1995.

\bibitem{MR2833514}
Martin~W. Liebeck, E.~A. O'Brien, Aner Shalev, and Pham~Huu Tiep.
\newblock Products of squares in finite simple groups.
\newblock {\em Proc. Amer. Math. Soc.}, 140(1):21--33, 2012.

\bibitem{MR553598}
Ian~G. MacDonald.
\newblock {\em Symmetric functions and {H}all polynomials}.
\newblock Oxford Mathematical Monographs. Clarendon Press, Oxford, 1979.

\bibitem{MR1484489}
Allan Steel.
\newblock A new algorithm for the computation of canonical forms of matrices
  over fields.
\newblock {\em J. Symbolic Computation}, 24:409--432, 1997.

\bibitem{MR937520}
Richard Stong.
\newblock Some asymptotic results on finite vector spaces.
\newblock {\em Advances in Applied Math.}, 9:167--199, 1988.

\bibitem{sage}
{The Sage Developers}.
\newblock {\em {S}age {M}athematics {S}oftware ({V}ersion 6.8)}.
\newblock The Sage Development Team, 2015.
\newblock \texttt{http://www.sagemath.org}.

\bibitem{MR1711918}
G.~E. Wall.
\newblock Counting cyclic and separable matrices over a finite field.
\newblock {\em Bull. Austral. Math. Soc.}, 60:253--284, 1999.

\bibitem{MR0653502}
Herbert~S. Wilf.
\newblock What is an answer?
\newblock {\em American Math. Monthly}, 89:289--292, 1982.

\end{thebibliography}
\bibliographystyle{plain}

Keywords: matrix, square, sequence, conjugacy class

\end{document}